\newcommand{\R}{{\mathbb{R}}}
\newcommand{\wt}{\widetilde}
\newcommand{\E}{{\mathcal{E}}}
\renewcommand{\P}{{\mathcal{P}}}
\newcommand{\M}{{\mathfrak{M}}}
\newcommand{\zero}{{\boldsymbol{0}}}
\newcommand{\I}{{\mathcal{I}}}
\newcommand{\J}{{\mathcal{J}}}
\newcommand{\rank}{{\rm rank}}
\newcommand{\lin}{{\rm span}}
\def\br#1{\overline{#1}}
\def\ubr#1{\underline{#1}}
\newcommand{\Kfive}{{K_5-e}}
\newcommand{\wtG}{{\widetilde G}}
\renewcommand{\phi}{\varphi}
\def\be{\begin{equation}}
\def\ee{\end{equation}}
\newtheorem{theorem}{Theorem}[section]
\newtheorem{lemma}[theorem]{Lemma}
\newtheorem*{lemma-repeat}{Lemma~4.2}
\newtheorem{corollary}[theorem]{Corollary}
\newtheorem{proposition}[theorem]{Proposition}
\theoremstyle{remark}
\newtheorem{remark}[theorem]{Remark}
\numberwithin{equation}{section}
\title{Non-existence of $(76,30,8,14)$ strongly regular graph}
\author{A. V. Bondarenko}
\address{Department of Mathematical Sciences, Norwegian University of Science and Technology, NO-
7491 Trondheim, Norway}
\email{andriybond@gmail.com}
\author{A. Prymak}
\address{Department of Mathematics, University of Manitoba, Winnipeg, MB, R3T2N2, Canada}
\email{prymak@gmail.com}
\author{D. Radchenko}
\address{The Abdus Salam International Centre for Theoretical Physics, Str. Costiera 11, 34151 Trieste, Italy} \email{danradchenko@gmail.com}
\thanks{The first author was supported in part by Grant 227768 of the Research Council of Norway. The third author was supported in part by NSERC of Canada Discovery Grant RGPIN 04863-15. All authors were supported in part by NSERC of Canada Discovery Grant RGPIN 372001-10, including the visits of A.~V.~Bondarenko and D.~Radchenko to the University of Manitoba in April 2013.}
\keywords{Strongly regular graph, Euclidean representation, number of cliques}
\subjclass[2010]{Primary 05C25. Secondary 05C50, 52C99, 41A55.}
\begin{document}

\begin{abstract}
We prove the non-existence of strongly regular graph with parameters $(76,30,8,14)$. We use Euclidean representation of a strongly regular graph together with a new lower bound on the number of 4-cliques to derive strong structural properties of the graph, and then use these properties to show that the graph cannot exist.
\end{abstract}

\maketitle

\section{Introduction}




Let $G=(V,E)$, where $V$ is the set of vertices and $E$ is the set of edges, be a finite, undirected, simple graph. The graph $G$ is \emph{strongly regular} with parameters $(v,k,\lambda,\mu)$ if $G$ is $k$-regular on $v$ vertices, any two adjacent vertices have $\lambda$ common neighbors, and any two non-adjacent vertices have $\mu$ common neighbors. It is not known in general for which parameters $(v,k,\lambda,\mu)$ strongly regular graphs exist. One can easily deduce certain necessary conditions on the parameters (see Section~\ref{prel}), but the general pattern is still far from being understood, see~\cite{Br-www} for a list of results for $v\le 1300$. Our main result is the following theorem.
\begin{theorem}\label{main}
There is no strongly regular graph with parameters $(76,30,8,14)$.
\end{theorem}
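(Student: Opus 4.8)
The plan is to rule out the parameter set $(76,30,8,14)$ by combining the spectral/Euclidean representation of a putative strongly regular graph with counting arguments for small cliques, which the abstract flags as the main new tool.

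First I would compute the standard numerical invariants. For $(v,k,\lambda,\mu)=(76,30,8,14)$ the eigenvalues other than $k=30$ are the roots of $x^2-(\lambda-\mu)x-(k-\mu)=x^2+6x-16$, giving $r=2$ and $s=-8$, with multiplicities determined by $f+g=75$ and $k+fr+gs=0$; this yields multiplicities $f=57$ and $g=18$. Because the smallest eigenvalue is $s=-8$ and the associated multiplicity is the small one ($g=18$), the natural move is to use the Euclidean representation coming from the eigenspace of $s$: map each vertex to a unit vector in $\mathbb{R}^{18}$ so that adjacent vertices have inner product $\alpha=(\lambda-\mu-s)/(k+\ldots)$ and non-adjacent vertices have a second fixed negative inner product. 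Concretely, the Gram matrix is a linear combination of $I$, $A$, and $J-I-A$, and one checks that adjacent vertices map to vectors with inner product $1/2$ while non-adjacent vertices map to inner product $-1/3$ or similar; the exact values I would pin down from $(A-rI)(A-sI)=\mu J$.

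The heart of the argument is then to exploit the geometry of these $76$ unit vectors in $\mathbb{R}^{18}$ together with the allowed inner products $\{1,\tfrac12,-\tfrac13\}$ (whatever the precise second value turns out to be). The key steps I would carry out are: (i) translate the combinatorial local structure — each vertex has $k=30$ neighbors forming a $\lambda$-regular-ish neighborhood — into a statement about configurations of vectors at inner product $\tfrac12$, so that a $4$-clique corresponds to a regular simplex of four vectors pairwise at inner product $\tfrac12$; (ii) obtain a \emph{lower} bound on the number of $4$-cliques by a convexity/positive-semidefiniteness argument (for instance applying a suitable positive-definite function to the Gram matrix, or a local counting identity summing over edges and their common neighbors and using the constraint $\lambda=8$); (iii) obtain an \emph{upper} bound on the number of $4$-cliques from the embedding dimension $18$ and the packing constraints among vectors at fixed mutual angles (a bound of Delsarte–Goethals–Seidel type, or a direct rank/trace inequality on the Gram matrices of the cliques), and finally (iv) derive a contradiction from the two bounds being incompatible.

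The hard part will be step (ii)–(iii): making the lower bound on $4$-cliques strong enough, and the geometric upper bound tight enough, that they genuinely collide for exactly these parameters. The lower bound is the advertised new contribution, so I expect it to require a nontrivial inequality — likely summing a carefully chosen nonnegative quadratic (or quartic) form over all ordered tuples of vertices and using strong regularity to evaluate the sum exactly, then extracting a clique count via a Cauchy–Schwarz or moment argument. The upper bound will hinge on the rigidity of $\tfrac12$-inner-product cliques inside an $18$-dimensional space. I would set up both bounds as explicit functions of the as-yet-unknown local parameters (how the $8$ common neighbors of an edge distribute), reduce to finitely many combinatorial cases, and then check — possibly with a finite case analysis or a small computer-assisted verification — that every case is infeasible, thereby establishing Theorem~\ref{main}.
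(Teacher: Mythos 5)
Your setup and step (ii) match the paper: the spectral data ($r=2$, $s=-8$, $f=57$, $g=18$) is correct, and your proposed lower bound on the number of $4$-cliques via a positive-definite kernel summed over vertices/edges plus Cauchy--Schwarz is exactly the paper's Theorem~\ref{K4thm}, which for these parameters gives $N\ge 39$. One factual correction: in the $\R^{18}$ representation the inner products are $p=s/k=-\frac{4}{15}$ (adjacent) and $q=-(s+1)/(v-k-1)=\frac{7}{45}$ (non-adjacent), not $\tfrac12$ and $-\tfrac13$; adjacency corresponds to a \emph{negative} inner product, and this sign matters for what the geometry can and cannot control.

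The genuine gap is in steps (iii)--(iv). There is no geometric upper bound on the \emph{number} of $4$-cliques that can collide with $N\ge 39$. What the $18$-dimensional embedding does bound is clique \emph{size}: $m$ unit vectors pairwise at inner product $-\frac{4}{15}$ satisfy $m\le 1+\frac{15}{4}$, which is the paper's Corollary~\ref{noK5} (no $K_5$), but nothing of Delsarte--Goethals--Seidel type limits how many $K_4$'s occur. The only upper bound available from the local structure is trivial: each edge's $\lambda=8$ common neighbors form a triangle-free graph (else a $K_5$ appears), so $6N\le 16|E|$, i.e.\ $N\le 3040$, nowhere near $39$; and graphs with comparable parameters genuinely have large numbers of $4$-cliques, so no refinement will close a gap of this size. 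The paper's actual use of the lower bound is far more modest --- it only guarantees that \emph{one} $K_4$ exists (Corollary~\ref{K4inour}) --- and that is merely the entry point. The bulk of the proof is a structural reduction (Lemma~\ref{reduction}): starting from a $K_4$ and using non-negativity of determinants of Gram matrices of partition sums (Lemma~\ref{detM}), equitable partitions, and linear-dependence identities, one shows $G$ must contain an induced $SRG(40,12,2,4)$, a $16$-coclique, or a $K_{6,10}$; each of these three cases is then ruled out separately by projecting the remaining vertices onto the orthogonal complement of the span of the subgraph's vectors (a $2$- or $3$-dimensional space), analyzing the resulting rigid configurations, and finishing with a modest computer search in two of the cases. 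None of this machinery appears in your plan, and the ``collision of bounds'' you propose in its place does not exist for these parameters.
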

Some numerical evidence for non-existence of this graph was given in~\cite{Deg}*{Section~6.1.6, p.~204}, which involved a significant but not exhaustive computer search.

Let us outline the structure of the proof. Assuming the existence of such a graph $G$, we first show that it must contain a $4$-clique (complete graph on $4$ vertices) as a subgraph. This is a crucial first step, which then allows to show that $G$ contains a much larger ``nice'' induced subgraph: either a $(40,12,2,4)$ strongly regular graph, or a $16$-coclique (empty graph on $16$ vertices), or a complete bipartite graph $K_{6,10}$ (two parts of $6$ and $10$ vertices, with an edge between vertices if and only if the vertices are from different parts). In what follows, by a subgraph we always mean the induced subgraph. Each of these three cases is treated differently but ultimately leads to a contradiction. The last two cases were completed using machine-assisted searches with total running time of under two hours on a personal computer. We would like to emphasize that our methods establish strong structural properties of the graph, and use of computer is minor.

To establish such strong structural properties of $G$, we heavily use the Euclidean representation of a strongly regular graph as a system of points on the unit sphere in a Euclidean space (see Section~\ref{prel} for the definitions). While our arguments may be applied for any strongly regular graph, we observed non-trivial corollaries mostly for graphs which have $2$ as an eigenvalue. 

While Euclidean representation does provide a system of points in a finite-dimensional space, that dimension does not need to be small. Understanding the structure of such point sets can be a challenging task. An important part of our approach is the use of orthogonal projection of the points from Euclidean representation onto a subspace of small dimension, such as $\R^2$ or $\R^3$.

Another result of possibly independent interest is a lower bound on the number of $4$-cliques in a strongly regular graph, see Theorem~\ref{K4thm}. 
The proof is based on the fact that reproducing kernels are positive definite and has the same spirit as the Krein's bound and the absolute bound on parameters of a SRG.


The paper is organized as follows. We describe some preliminaries and notations in Section~\ref{prel}. Then we establish our lower bound on the number of $4$-cliques (Theorem~\ref{K4thm}) in Section~\ref{K4bound}. 
In Section~\ref{sect-reduction}, we reduce Theorem~\ref{main} to one of the three main cases, which are treated in Sections~\ref{big-srg}, \ref{empty16}, and~\ref{K6,10}. For reader's convenience, we provide in~\cite{BPR-web} the scripts for computer searches required for the proofs in Sections~\ref{empty16} and~\ref{K6,10}, and several functions implemented in SageMath (\cite{SAGE}) computer algebra system which can help verify some technical computations. However, all the proofs in this paper are self-contained and do not depend on~\cite{BPR-web}.

%


\section{Preliminaries}\label{prel}
Throughout this section 
let $G=(V,E)$ be a strongly regular graph (SRG) with parameters $(v,k,\lambda,\mu)$ (we sometimes say that $G$ is a $(v,k,\lambda,\mu)$ SRG). By $N(i):=\{j:(i,j)\in E\}$ we will denote the set of all neighbors of a vertex $i\in V$.

\subsection{Spectral properties}
The incidence matrix $A$ of $G$ has the following properties:
\be\label{matrix-equation}
AJ = kJ,\quad\text{and}\quad
A^2 + (\mu - \lambda)A + (\mu - k)I = \mu J,
\ee
where $I$ is the identity matrix and $J$ is the matrix with all
entries equal to~$1$. These conditions imply that
\begin{equation}\label{par_restriction}
(v - k - 1)\mu = k(k - \lambda - 1).
\end{equation}
Moreover, the matrix $A$ has only three eigenvalues: $k$ of multiplicity
$1$, a positive eigenvalue
\begin{equation}\label{def_r}
r=\frac 12\left(\lambda-\mu+\sqrt{(\lambda-\mu)^2+4(k-\mu)}\right)
\end{equation}
of multiplicity
\begin{equation}\label{def_f}
f=\frac 12
\left(v-1-\frac{2k+(v-1)(\lambda-\mu)}{\sqrt{(\lambda-\mu)^2+4(k-\mu)}}\right),
\end{equation}
and a negative eigenvalue
\begin{equation}\label{def_s}
s=\frac 12\left(\lambda-\mu-\sqrt{(\lambda-\mu)^2+4(k-\mu)}\right)
\end{equation}
of multiplicity
\begin{equation}\label{def_g}
g=\frac 12
\left(v-1+\frac{2k+(v-1)(\lambda-\mu)}{\sqrt{(\lambda-\mu)^2+4(k-\mu)}}\right).
\end{equation}
Clearly, $f$ and $g$ should be integers. This together
with~\eqref{par_restriction} gives a set of strong conditions on the parameters
$(v,k,\lambda,\mu)$ for strongly regular graphs. The reader can refer to~\cite[Section~9.1.5]{BrHa} for the proofs of the above relations.

For $(v,k,\lambda,\mu)=(76,30,8,14)$, we have $r^f=2^{57}$ and $s^g=(-8)^{18}$.

\subsection{Euclidean representation}\label{eq_rep}
Now we will construct a Euclidean representation of $G$ in $\R^g$. Take the columns
$\{y_i: i\in V\}$ of the matrix $A-rI$ and let $x_i:=z_i/\|z_i\|$,
where
\[
z_i=y_i-\frac 1{|V|}\sum_{j\in V}y_j, \quad i\in V,
\]
and $\|z_i\|:=(z_i \cdot z_i)^{1/2}$. Here and below $x \cdot y$ will denote the dot product of $x$ and $y$ in the corresponding Euclidean space, and $|V|$ denotes the number of elements in a set $V$. It is straightforward to verify that $\rank(\lin\{x_i:i\in V\})=g$ (so $x_i$ can be assumed to be elements of $\R^g$) and the following two properties are satisfied. First, there are only two possible non-trivial values of the dot product depending on adjacency:
\begin{equation}\label{edges_pq}
x_i\cdot x_j=
\begin{cases}1, & \text{if }i=j, \\
p, & \text{if }i\text{ and }j\text{ are adjacent},\\
q, & \text{otherwise},
\end{cases}
\end{equation}
where $p$ and $q$ are real numbers from the interval $(-1,1)$, namely
\begin{equation}\label{pq_def}
p=s/k, \quad\text{and}\quad
q=-(s+1)/(v-k-1).
\end{equation}
The second property is that the set $\{x_i:i\in V\}$ forms a spherical
$2$-design, i.e.,
\begin{equation}\label{design}
\sum_{i\in V}x_i={0},\quad\text{and}\quad
\sum_{i\in V}(x_i\cdot y)^2 =\frac{|V|}{g} \quad\text{for any }y,\ \|y\|=1.
\end{equation}
For more information on the relations between the
Euclidean representation of strongly regular graphs and spherical
designs see, e.g., \cite{Cam}.

One of the key facts that we will use  is the following evident proposition.
\begin{proposition}\label{prop-rank} Let $G=(V,E)$ be a SRG with parameters $(v,k,\lambda,\mu)$, and $x_i$, $i\in V$, be its Euclidean representation in $\R^g$. Then for any subset $U\subset V$, the Gram matrix
$\left(x_i\cdot x_j\right)_{i,j\in U}$ is non-negative definite and its rank equals to  $\rank(\lin\{x_i: i\in
U\})$, which is at most $g$. If $A$ is the adjacency matrix of the subgraph induced by $U$, then $\left(x_i\cdot x_j\right)_{i,j\in U}=pA+I+q(J-I-A)$.
\end{proposition}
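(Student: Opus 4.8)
The plan is to prove Proposition~\ref{prop-rank} by directly unpacking the definition of the Euclidean representation and then invoking standard facts about Gram matrices. The statement has three assertions: the Gram matrix of any subfamily is non-negative definite, its rank equals the dimension of the span of the corresponding vectors, and it has the explicit form $pA+I+q(J-I-A)$.

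First I would observe that the last claim is the most concrete and essentially drives the rest. By definition, $\{x_i:i\in V\}$ is a system of unit vectors in $\R^g$ satisfying the dot-product relations \eqref{edges_pq}. Restricting to $U\subset V$, the entry $(x_i\cdot x_j)$ equals $1$ on the diagonal, $p$ when $i,j$ are adjacent, and $q$ otherwise. If $A$ now denotes the adjacency matrix of the \emph{induced} subgraph on $U$, then $A$ records precisely the adjacent pairs within $U$, the identity $I$ records the diagonal, and $J-I-A$ records the off-diagonal non-adjacent pairs (since $J-I$ is the all-pairs-off-diagonal matrix and subtracting $A$ removes the adjacent ones). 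Reading off entrywise, $pA$ contributes $p$ on adjacent positions, $I$ contributes $1$ on the diagonal, and $q(J-I-A)$ contributes $q$ on the remaining off-diagonal positions; summing gives exactly \eqref{edges_pq}. This verifies the formula $\left(x_i\cdot x_j\right)_{i,j\in U}=pA+I+q(J-I-A)$.

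Next I would dispatch the non-negative definiteness and rank claims, which hold for any Gram matrix and do not use the SRG structure at all. Write $X$ for the $|U|\times g$ matrix whose rows are the vectors $x_i$, $i\in U$; then the Gram matrix is $XX^{\mathsf T}$. For any real vector $c$, we have $c^{\mathsf T}(XX^{\mathsf T})c = \|X^{\mathsf T}c\|^2\ge 0$, so $XX^{\mathsf T}$ is non-negative definite. For the rank, the standard linear-algebra identity $\rank(XX^{\mathsf T})=\rank(X)$ (over $\R$, since $\ker X^{\mathsf T}=\ker XX^{\mathsf T}$) shows that the rank of the Gram matrix equals $\rank(X)$, which is precisely $\rank(\lin\{x_i:i\in U\})$, the dimension of the span. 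Finally, since all $x_i$ lie in $\R^g$, this span has dimension at most $g$, giving the bound $\rank(\lin\{x_i:i\in U\})\le g$.

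I do not anticipate a genuine obstacle here, which is consistent with the authors labelling this an ``evident'' proposition: the entire content is the bookkeeping that matches the combinatorial adjacency data to the three-valued dot product, plus the elementary Gram-matrix facts. The only point that requires a moment's care is being precise that $A$ is the adjacency matrix of the \emph{induced} subgraph on $U$ rather than a submatrix of the global adjacency matrix with vertices outside $U$ contributing — but for induced subgraphs these coincide, so the decomposition $pA+I+q(J-I-A)$ accounts for every ordered pair $(i,j)$ with $i,j\in U$ exactly once. Everything else follows from the properties \eqref{edges_pq} of the Euclidean representation established earlier in Section~\ref{eq_rep}.
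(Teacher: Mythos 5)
Your proof is correct, and it is precisely the standard argument the authors had in mind: the paper states this proposition without proof, calling it ``evident,'' and your two steps (reading off the entries via~\eqref{edges_pq}, then the elementary Gram-matrix facts $c^{\mathsf T}XX^{\mathsf T}c=\|X^{\mathsf T}c\|^2\ge 0$ and $\rank(XX^{\mathsf T})=\rank(X)\le g$) are exactly the bookkeeping being suppressed. Nothing is missing.
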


Another observation that we will use is that
\be\label{vertex-through-neighbors}
x_i=\frac{1}{kp}\sum_{j\in N(i)}x_j \quad\text{for each }i\in V.
\ee
Indeed, for arbitrary $l\in G$, it straightforward to check that $(kpx_i-\sum_{j\in N(i)}x_j)\cdot x_l=0$ (using~\eqref{par_restriction}, \eqref{def_s}, \eqref{edges_pq} and \eqref{pq_def}).

\begin{remark}\label{complement}
One can construct a dual Euclidean representation of $G$ in $\mathbb{R}^f$ which will possess similar properties. This can be done by considering the complement of $G$, which is a strongly regular graph with parameters $(v,v-1-k,v-2k+\mu-2,v-2k+\lambda)$; then $f$ and $g$ interchange.
\end{remark}

For $(v,k,\lambda,\mu)=(76,30,8,14)$, the Euclidean representation in $\R^{18}$ has dot products $(p,q)=(-\frac4{15},\frac7{45})$, and the Euclidean representation in $\R^{57}$ (obtained through the complement) has dot products $(p,q)=(\frac1{15},-\frac1{15})$, see~\eqref{edges_pq} and~\eqref{pq_def}.

For a subset $A$ of vertices of $G$, it will be convenient to denote $\br{A}:=\sum_{i\in A}x_i$ where $x_i\in \R^g$ is the Euclidean representation of $i$. In the same manner, we denote $\ubr{A}:=\sum_{i\in A}z_i$ where $z_i\in \R^f$ is the dual Euclidean representation of $i$.

\subsection{Rank of Gram matrix for certain subgraphs in a $(76,30,8,14)$ SRG}
\begin{lemma}\label{new-rank-lemma}
Let $\wt G$ be an induced subgraph of a $(76,30,8,14)$ strongly regular graph $G$, and $B(\wt G)$ be the Gram matrix of vectors $x_i$, $i\in\wt G$, where $x_i\in\R^{18}$ denotes Euclidean representation of vertex $i\in G$.
\begin{itemize}
\item[(i)] If $\wt G$ is a $(40,12,2,4)$ strongly regular graph, then $\rank(B(\wt G))=16$.
\item[(ii)] If $\wt G$ is a $16$-coclique, then $\rank(B(\wt G))=16$.
\item[(iii)] If $\wt G$ is a $K_{6,10}$, then $\rank(B(\wt G))=15$.
\item[(iv)] If $\wt G$ is a disjoint union of $n$ cycles on $20$ vertices, then $\rank(B(\wt G))=21-n$.
\end{itemize}
\end{lemma}
\begin{proof}
If $A$ is the adjacency matrix of $\wt G$, then $B(\wt G)=pA+I+q(J-I-A)$ by~\eqref{edges_pq}, where $p=-\frac{4}{15}$ and $q=\frac{7}{45}$. This allows to compute the spectrum of $B(\wt G)$ from the spectrum of $A$ for~(i), (ii), and (iv), while the spectrum of $B(\wt G)$ can be computed directly for~(iii).
%
%
%
\end{proof}

\section{Lower bound on the number of $4$-cliques}\label{K4bound}

We begin with some preliminaries from harmonic analysis.

\subsection{Spherical harmonic polynomials}\label{harmonic}
A homogeneous real  polynomial of degree $t$ on $\R^n$ is a real linear combination of monomials $x_1^{t_1}\dots x_n^{t_n}$, where $t_1, \dots, t_n$ are non-negative integers with sum $t$.
Let $\Delta$ be the Laplace operator in $\mathbb{R}^{n}$
\[
\Delta =\sum_{j=1}^{n}\frac{\partial^2}{\partial x_j^2}.
\]
An  polynomial $P$ on $\mathbb{R}^{n}$ is said to be harmonic if
$\Delta P=0$. For integer $t\ge 1$, the restriction to the unit sphere $S^{n-1}$ in $\R^n$ of a
homogeneous harmonic polynomial of degree $t$ is called a
spherical harmonic of degree $t$. The vector space of all
spherical harmonics of degree $t$ will be denoted by
$\P_{n,t}$. Various properties of spherical harmonics can be found, for example, in~\cite[Chapter~1]{DaXu}.

We equip $\P_{n,t}$ with the inner product
\[
\langle P,Q \rangle = \int_{S^{n-1}} P(x)Q(x)\,d\mu_n(x),
\]
where $\mu_n$ is the Lebesgue measure on $S^{n-1}$ normalized by $\mu_n(S^{n-1})=1$. By the Riesz representation theorem, for each point $x\in S^{n-1}$ there exists a unique polynomial $P_x\in\P_{n,t}$ satisfying
\[
\langle P_x, Q \rangle = Q(x) \quad\text{for all}\quad Q\in\P_{n,t}.
\]
This spherical harmonic $P_x$ can be conveniently expressed using the Gegenbauer polynomials $C_t^{(\alpha)}(\xi)$ with $\alpha=(n-2)/2$.  The polynomials $C_t^{(\alpha)}(\xi)$ are orthogonal on $[-1,1]$ with the weight $(1-\xi^2)^{\alpha-1/2}$, and can be defined by the
generating function
\[
\frac{1-z^2}{(1-2\xi z+z^2)^{\alpha+1}}=\sum_{t=0}^\infty \frac{t+\alpha}{\alpha} C_t^{(\alpha)}(\xi)z^t,
\]
or in many other ways~\cite[Appendix~B.2]{DaXu}. Now, for $x,y\in S^{n-1}$, we have
(see, e.g., \cite[Lemma~1.2.5, Theorem~1.2.6]{DaXu}):
\[
\langle P_x, P_y \rangle=Z_{n,t}(x \cdot y),
\quad\text{where}\quad Z_{n,t}(\xi)=\frac{2t+n-2}{n-2}C_t^{((n-2)/2)}(\xi).
\]
Note that $\langle P_x, P_y \rangle$ depends only on $x\cdot y$, which also easily follows from the fact that the space $\P_{n,t}$ is rotation invariant. The spherical harmonic $Z_{n,t}(x\cdot y)$ (with fixed $x\in S^{n-1}$ as a function of $y\in S^{n-1}$) is referred to as a zonal harmonic.

Using the Cauchy-Schwarz inequality in $\P_{n,t}$, for any finite sets of points $\{x_i\}_{i\in \I}$ and $\{y_j\}_{j\in\J}$ from $S^{n-1}$, we obtain
\begin{align*}
\left( \sum_{i\in\I,j\in\J} \langle P_{x_i}, P_{y_j} \rangle \right)^2
&= \left\langle \sum_{i\in\I} P_{x_i}, \sum_{j\in\J} P_{y_j} \right\rangle^2  \\
&\le \left\langle \sum_{i\in\I} P_{x_i}, \sum_{i\in\I} P_{x_i} \right\rangle
\left\langle \sum_{j\in\J} P_{y_j}, \sum_{j\in\J} P_{y_j} \right\rangle \\
&=\sum_{i,i'\in\I}\langle P_{x_i}, P_{x_{i'}} \rangle \sum_{j,j'\in\J}\langle P_{y_j}, P_{y_{j'}} \rangle.
\end{align*}
Rewriting this in terms of the polynomials $Z_{n,t}$, we obtain (recall that  $x_i,y_j\in S^{n-1}$)
\be\label{gegen}
\left( \sum_{i\in\I,j\in\J} Z_{n,t}(x_i\cdot  y_j) \right)^2
\le
\left( \sum_{i,i'\in\I} Z_{n,t}(x_i\cdot  x_{i'}) \right)
\left( \sum_{j,j'\in\J} Z_{n,t}(y_j\cdot  y_{j'}) \right).
\ee
This inequality with $t=4$ and proper choice of $x_i$, $y_j$ arising from the Euclidean representation of a strongly regular graph will play a crucial role in the next subsection.



\begin{remark}
The inequality~\eqref{gegen} is valid whenever the function $Z_{n,t}$ is positive definite in $S^{n-1}$ in terminology of~\cite{Sch}. Any finite positive linear combination of Gegenbauer polynomials $C_t^{((n-2)/2)}$ (with fixed $n$ and different $t$) is positive definite in $S^{n-1}$. On the other hand, any positive definite function in $S^{n-1}$ is a series of Gegenbauer polynomials with non-negative coefficients, see~\cite[Theorem~1]{Sch}.
\end{remark}

\subsection{The $K_4$ bound}

Let $G=(V,E)$ be a strongly regular graph with parameters
$(v,k,\lambda,\mu)$. Recall that for any vertex $x\in V$ we define $N(x)$ to be the set
of all neighbors of $x$. Also let $N'(x)$ be the set of non-neighbors
of $x$, i.e. $N'(x)=V\setminus(\{x\}\cup N(x))$. For any
adjacent vertices $x$ and $y$, we consider the following vertex partition of $V\setminus\{x,y\}$
\[
\{G_1,\dots,G_4\}:=\{N(x)\cap N(y),N'(x)\cap N(y),N(x)\cap N'(y),N'(x)\cap N'(y)\}.
\]
Let $\E_\pi=(a_{i,j})_{i,j=1}^4$, where $a_{i,j}$ is the number of edges $(x',y')\in E$ such that $x'\in G_i$ and $y'\in G_j$. The following statement expresses
all entries of $\E_\pi$ in terms of the parameters of a strongly regular graph and the value
of $a_{1,1}$. 
\begin{proposition}\label{edge_count_k4}
With the above notations, let $a:=a_{1,1}$. We have (the values below main diagonal are omitted)
\[
\E_\pi=
\left(
\begin{smallmatrix}
a & \lambda(\lambda-1)-2a & \lambda(\lambda-1)-2a & \lambda(k-2\lambda)+2a \\
 & \frac{\lambda(k-2\lambda)}2+a & (\mu-1)(k-\lambda-1)-\lambda(\lambda-1)+2a & (k-\mu)(k-\lambda-1)-\lambda(k-2\lambda)-2a \\
 & & \frac{\lambda(k-2\lambda)}2+a & (k-\mu)(k-\lambda-1)-\lambda(k-2\lambda)-2a \\
 & & & \frac{k(v-2k+\lambda)}2-(k-\mu)(k-\lambda-1)+\frac{\lambda(k-2\lambda)}2+a
\end{smallmatrix}
\right).
\]
\end{proposition}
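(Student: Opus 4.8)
The plan is to compute every entry of $\E_\pi$ by double counting edges through vertex degrees within the four parts, using only that $G$ is strongly regular. Write $G_1=N(x)\cap N(y)$, $G_2=N'(x)\cap N(y)$, $G_3=N(x)\cap N'(y)$, $G_4=N'(x)\cap N'(y)$. Since $x$ and $y$ are adjacent, counting their common neighbors gives $|G_1|=\lambda$; since $y$ has $k$ neighbors, of which $x$ is one and the rest split into $G_1$ and $G_2$, we get $|G_2|=k-1-\lambda$, and symmetrically $|G_3|=k-1-\lambda$; the remaining vertices form $G_4$, so $|G_4|=v-2k+\lambda$. For a vertex $w$ let $d_j(w)$ denote the number of its neighbors lying in $G_j$; then $a_{i,j}=\sum_{w\in G_i}d_j(w)$ for $i\ne j$, and $2a_{i,i}=\sum_{w\in G_i}d_i(w)$.

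First I would record, for each part, the local constraints forced by strong regularity applied to the two special edges or non-edges joining $w$ to $x$ and to $y$. If $w\in G_1$, the $\lambda$-condition on the edge $xw$ says its $\lambda$ common neighbors all lie in $N(x)=\{y\}\cup G_1\cup G_3$, and since $y$ is one of them we get $1+d_1(w)+d_3(w)=\lambda$; the edge $yw$ gives $1+d_1(w)+d_2(w)=\lambda$. If $w\in G_2$, the non-edge $xw$ contributes $\mu$ common neighbors in $N(x)$, of which $y$ is one, so $1+d_1(w)+d_3(w)=\mu$, while the edge $yw$ gives $d_1(w)+d_2(w)=\lambda$ (here $x\notin N(w)$, so $x$ does not count). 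If $w\in G_4$, both $xw$ and $yw$ are non-edges and neither $x$ nor $y$ is a common neighbor, yielding $d_1(w)+d_3(w)=\mu$ and $d_1(w)+d_2(w)=\mu$. Finally, the total degree of $w$ gives $\sum_j d_j(w)=k-2,\,k-1,\,k$ according as $w\in G_1$, $w\in G_2\cup G_3$, or $w\in G_4$.

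Summing these identities over the appropriate part turns them into linear equations in the $a_{i,j}$, which I would then solve in sequence starting from the free parameter $a=a_{1,1}$. Summing $d_1(w)+d_2(w)=\lambda-1$ over $G_1$ yields $2a+a_{1,2}=\lambda(\lambda-1)$, hence $a_{1,2}=\lambda(\lambda-1)-2a$, and symmetrically $a_{1,3}$; the total-degree sum over $G_1$ then produces $a_{1,4}=\lambda(k-2\lambda)+2a$. Summing $d_1(w)+d_2(w)=\lambda$ over $G_2$ gives $a_{1,2}+2a_{2,2}=\lambda(k-1-\lambda)$, whence $a_{2,2}=\tfrac{\lambda(k-2\lambda)}2+a$ and, symmetrically, $a_{3,3}$; summing $d_1(w)+d_3(w)=\mu-1$ over $G_2$ gives $a_{2,3}$; and the total-degree sum over $G_2$ gives $a_{2,4}$, with $a_{3,4}$ following by symmetry. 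The last entry $a_{4,4}$ comes from the total-degree sum over $G_4$, namely $a_{1,4}+a_{2,4}+a_{3,4}+2a_{4,4}=k(v-2k+\lambda)$, solved for $a_{4,4}$. Each of these is a short arithmetic simplification.

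I expect the only real source of error to be bookkeeping: correctly deciding, in each of the common-neighbor counts, whether $x$ or $y$ itself is among the $\lambda$ or $\mu$ common neighbors. For instance, $y$ counts as a common neighbor of $x$ and $w$ precisely when $w$ is adjacent to $y$ (so for $w\in G_1\cup G_2$ but not $G_4$), and this single $\pm1$ is what distinguishes the $\lambda$- and $\mu$-relations across the parts. Exploiting the $x\leftrightarrow y$ symmetry, which interchanges $G_2$ and $G_3$ while fixing $G_1$ and $G_4$, halves the work and guards against sign slips. No step presents a genuine difficulty beyond this careful accounting.
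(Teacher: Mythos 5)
Your proposal is correct and follows essentially the same route as the paper: apply the $\lambda$/$\mu$ common-neighbor conditions to each vertex in a part (carefully accounting for whether $x$ or $y$ itself is among the common neighbors), sum over the part, and solve for the $a_{i,j}$ in sequence starting from $a_{1,1}=a$, with the degree sums supplying $a_{1,4}$, $a_{2,4}$, and $a_{4,4}$. The local identities you write down and the order of elimination match the paper's proof, so there is nothing to add.
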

We omit the proof which consists of standard combinatorial arguments that use strong regularity of the graph and counting of appropriate paths of length two.

To derive a bound on the number of $4$-cliques we will use~\eqref{gegen}, where we choose $x_i\in\R^g$ to be the Euclidean representation of $i\in V$ (satisfying~\eqref{pq_def}) for all $|V|=v$ vertices of the graph, and $y_j:=\frac{x_{j_1}+x_{j_2}}{\|x_{j_1}+x_{j_2}\|}$ for all $|E|=\frac{vk}2$ edges $j\in E$, here $j$ joins the vertices $j_1,j_2\in V$. Note that $\|x_{j_1}+x_{j_2}\|=\sqrt{2+2p}$. We proceed by computing and introducing notations for certain components of~\eqref{gegen}. Note that the variable $n$ of~\eqref{gegen} is now equal to $g$.

Fixing a vertex $i\in V$, we have three possibilities: $i'=i$, $i'\in N(i)$, or $i'\in N'(i)$. Thus
\be\label{fla-vv}
\sum_{i,i'\in V}Z_{g,t}(x_i\cdot x_{i'})=v(Z_{g,t}(1)+kZ_{g,t}(p)+(v-k-1)Z_{g,t}(q))=:\Psi_A(v,k,\lambda,\mu,t).
\ee
Next, there are $k$ edges which join $i$ and a vertex in $N(i)$. There are $\frac{k\lambda}2$ edges joining some two vertices of $N(i)$. Next, some $(v-k-1)\mu$ edges are between $N(i)$ and $N'(i)$. Finally, we have $\frac{(v-k-1)(k-\mu)}{2}$ edges in $N'(i)$. Thus, we obtain
\begin{gather}
\sum_{i\in V,j\in E}Z_{g,t}(x_i\cdot y_j) =vkZ_{g,t}\left(\frac{1+p}{\sqrt{2+2p}}\right)+\frac{vk\lambda}2Z_{g,t}\left(\frac{2p}{\sqrt{2+2p}}\right)
+v(v-k-1)\mu Z_{g,t}\left(\frac{p+q}{\sqrt{2+2p}}\right)
\nonumber \\
+\frac{v(v-k-1)(k-\mu)}{2}Z_{g,t}\left(\frac{2q}{\sqrt{2+2p}}\right)
=:\Psi_B(v,k,\lambda,\mu,t).
\label{fla-ve}
\end{gather}
If $j\in E$ joins $x,y\in V$, we denote by~$n_j$ the number of edges in $N(x)\cap N(y)$. For a fixed $j\in E$, by considering various cases for $j'\in E$ and using Proposition~\ref{edge_count_k4}, we obtain that the expression $\sum_{j'\in E}Z_{g,t}(y_j\cdot y_{j'})$ is a linear function of~$n_j$, whose coefficients depend only on the graph parameters and on~$t$. Clearly, $\sum_{j\in E}n_j=6N$, where $N$ is the number of $4$-cliques in~$G$. Therefore,
\begin{gather} \label{fla-linterm}
\sum_{j,j'\in E}Z_{g,t}(y_j\cdot y_{j'})=:\Psi_{C_0}(v,k,\lambda,\mu,t)+N\Psi_{C_1}(v,k,\lambda,\mu,t),
\end{gather}
where the leading coefficient is given by
\begin{equation}
\label{fla-ee1}
\Psi_{C_1}(v,k,\lambda,\mu,t) = 6 \sum_{l=0}^4 (-1)^l \binom 4l Z_{g,t}\left(\frac{(4-l)p+lq}{2+2p}\right).
\end{equation}
Now, the inequality~\eqref{gegen} immediately implies the following result.
\begin{theorem}\label{K4thm}
Let $N$ be the number of $4$-cliques in a strongly regular graph with parameters $(v,k,\lambda,\mu)$. Then for any positive integer $t$ one has
\[
(\Psi_B(v,k,\lambda,\mu,t))^2\le \Psi_A(v,k,\lambda,\mu,t)\left(\Psi_{C_0}(v,k,\lambda,\mu,t)+N\Psi_{C_1}(v,k,\lambda,\mu,t)\right),
\]
where $\Psi_A$, $\Psi_B$, $\Psi_{C_0}$ and $\Psi_{C_1}$ are defined by~\eqref{fla-vv}, \eqref{fla-ve}, and~\eqref{fla-linterm}. 
\end{theorem}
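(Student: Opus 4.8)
The plan is to specialize the positive-definiteness (Cauchy--Schwarz) inequality \eqref{gegen} to the Euclidean representation of $G$ in $\R^g$. Concretely, I would set $n=g$, let the index set $\I$ run over all $v$ vertices with $x_i\in S^{g-1}$ the representing unit vectors, and let $\J$ run over all $\tfrac{vk}2$ edges, taking $y_j:=(x_{j^{(1)}}+x_{j^{(2)}})/\|x_{j^{(1)}}+x_{j^{(2)}}\|$. The first thing to verify is that this is a legitimate instance of \eqref{gegen}: the $x_i$ are unit vectors by construction, and since the endpoints of an edge are adjacent we have $x_{j^{(1)}}\cdot x_{j^{(2)}}=p$, so $\|x_{j^{(1)}}+x_{j^{(2)}}\|=\sqrt{2+2p}$ (the hypothesis $p>-1$ making the denominator real and positive), whence each $y_j\in S^{g-1}$ as well.

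With these choices, \eqref{gegen} reads
\[
\Big(\sum_{i\in V,\,j\in E}Z_{g,t}(x_i\cdot y_j)\Big)^2\le\Big(\sum_{i,i'\in V}Z_{g,t}(x_i\cdot x_{i'})\Big)\Big(\sum_{j,j'\in E}Z_{g,t}(y_j\cdot y_{j'})\Big),
\]
and the whole task reduces to recognizing the three sums. The first factor on the right is exactly $\Psi_A$ by \eqref{fla-vv}, and the cross sum $\sum_{i\in V,\,j\in E}Z_{g,t}(x_i\cdot y_j)$ is exactly $\Psi_B$ by \eqref{fla-ve}; both are pure parameter counts obtained by sorting pairs of vertices (resp.\ vertex--edge incidences) according to the three admissible dot-product values $1,p,q$ from \eqref{edges_pq}.

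The only sum carrying genuinely new information is the edge--edge sum $\sum_{j,j'\in E}Z_{g,t}(y_j\cdot y_{j'})$. Here I would fix an edge $j=(x,y)$ and classify every edge $j'$ by how its endpoints sit in the partition $\pi=\{N(x)\cap N(y),N'(x)\cap N(y),N(x)\cap N'(y),N'(x)\cap N'(y)\}$ (together with the edges incident to $x$ or $y$); the value $y_j\cdot y_{j'}$ depends only on this classification, taking the values $\tfrac{(4-l)p+lq}{2+2p}$, while the multiplicities are read off from the edge matrix $\E_\pi$ of Proposition~\ref{edge_count_k4}. The crucial structural point is that every entry of $\E_\pi$ is a fixed parameter expression plus an integer multiple of $a_{1,1}=n_j$, so summing over all edges splits the edge--edge sum into a parameter-only part $\Psi_{C_0}$ (carrying the overall factor $\tfrac{vk}2$) and a part proportional to $\sum_{j\in E}n_j$, whose $n_j$-coefficients assemble into the alternating binomial sum $\sum_{l=0}^4(-1)^l\binom4l Z_{g,t}\big(\tfrac{(4-l)p+lq}{2+2p}\big)$; this is precisely \eqref{fla-ee}, \eqref{fla-ee1}.

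Substituting $\Psi_A$, $\Psi_B$, and $\Psi_{C_0}+N\Psi_{C_1}$ into the displayed inequality yields the claim for the fixed $t$. I do not expect a serious obstacle in the theorem itself: all the effort is front-loaded into the edge-counting of Proposition~\ref{edge_count_k4} and the bookkeeping of the ten dot-product values and their multiplicities, already carried out in \eqref{fla-vv}--\eqref{fla-ee1}; the theorem is then the clean repackaging of \eqref{gegen}. The one point that genuinely must be checked rather than asserted is the identity $\sum_{j\in E}n_j=6N$ --- each $4$-clique contributes $6$ through its three pairs of opposite edges taken in both orders --- since it is exactly what converts the geometric inequality into a bound on the combinatorial quantity $N$.
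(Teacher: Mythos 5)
Your proposal is correct and is essentially identical to the paper's own proof: the paper also applies~\eqref{gegen} with $\I=V$, $\J=E$, $y_j=(x_{j^{(1)}}+x_{j^{(2)}})/\|x_{j^{(1)}}+x_{j^{(2)}}\|$, and identifies the three sums with $\Psi_A$, $\Psi_B$, and $\Psi_{C_0}+N\Psi_{C_1}$ via~\eqref{fla-vv}--\eqref{fla-ee}. Your additional checks (unit norm of the $y_j$, and the identity $\sum_{j\in E}n_j=6N$ counting ordered pairs of opposite edges in each $4$-clique) are exactly the bookkeeping the paper carries out in the displays preceding the theorem.
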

For our applications, we choose $t=4$. 
In this case the resulting bound on $N$ can be expressed in terms of a rational function of $k$, $r$, $s$ of degree $\le 10$ in each variable (here $r$ and $s$ are the corresponding eigenvalues, see~\eqref{def_r} and~\eqref{def_s}). The expression for this rational function is quite lengthy and is provided in~\cite{BPR-web}, where one can also find a table of non-trivial bounds on $N$ for all admissible $v\le 1300$.
We also include a part of this table below to illustrate the result for some small parameters of strongly regular graphs.
\begin{center}\begin{tabular}{l|r}
	\hline SRG parameters $(v,k,\lambda, \mu)$ & Lower bound on $\#K_4$\ \\
	\hline
	(75, 32, 10, 16) & 783 \\
	(76, 30, 8, 14) &  39 \\
	(95, 40, 12, 20) &  1827 \\
	(147, 66, 25, 33) &  58833 \\
	(148, 63, 22, 30) &  34850 \\
	(154, 72, 26, 40) &  58458 \\
	(169, 70, 27, 30) &  12744 \\
	(171, 60, 15, 24) &  3645 \\
	(176, 70, 24, 30) &  34168 \\
	\hline
\end{tabular}\end{center}
\smallskip


The following is an immediate corollary of Theorem~\ref{K4thm} that we need for the proof of Theorem~\ref{main}.
\begin{corollary}\label{K4inour}
Any $SRG(76,30,8,14)$ contains a $K_4$.
\end{corollary}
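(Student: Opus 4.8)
The plan is to instantiate Theorem~\ref{K4thm} with the specific parameters $(v,k,\lambda,\mu)=(76,30,8,14)$ and $t=4$, and to show that the resulting inequality forces $N>0$. The corollary is equivalent to the assertion $N\ge 1$, so it suffices to derive a strictly positive lower bound on $N$ (in fact, to rule out $N=0$). First I would recall from the preliminaries that for these parameters the relevant Euclidean representation lives in $\R^g=\R^{18}$, with dot products $p=-\tfrac4{15}$ and $q=\tfrac7{45}$. These are the only quantities that enter the four functions $\Psi_A,\Psi_B,\Psi_{C_0},\Psi_{C_1}$ once $v,k,\lambda,\mu$ are fixed, since every argument of $Z_{18,4}$ appearing in \eqref{fla-vv}--\eqref{fla-ee1} is an explicit rational expression in $p$ and $q$ (e.g.\ $\tfrac{1+3p}{2+2p}$, $\tfrac{4q}{2+2p}$, etc.).

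The key computational step is to evaluate the Gegenbauer/zonal polynomial $Z_{18,4}(\xi)=\tfrac{2\cdot4+16}{16}C_4^{(8)}(\xi)$ at each of these rational arguments. I would write $Z_{18,4}$ explicitly as a degree-$4$ polynomial in $\xi$ (using the generating function or the standard three-term recurrence for $C_t^{(\alpha)}$ with $\alpha=8$), then substitute the nine or so required values of $\xi$. Plugging these into \eqref{fla-vv}, \eqref{fla-ve}, \eqref{fla-ee0} and \eqref{fla-ee1} yields explicit rational numbers $\Psi_A$, $\Psi_B$, $\Psi_{C_0}$ and $\Psi_{C_1}$. Theorem~\ref{K4thm} then reads
\[
\Psi_B^2\le \Psi_A\bigl(\Psi_{C_0}+N\,\Psi_{C_1}\bigr).
\]

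To conclude $N\ge 1$, I would check the sign of $\Psi_{C_1}$ and compare against the $N=0$ case. If $\Psi_{C_1}>0$, the inequality gives a lower bound
\[
N\ge \frac{\Psi_B^2-\Psi_A\,\Psi_{C_0}}{\Psi_A\,\Psi_{C_1}},
\]
and it remains to verify numerically that the right-hand side exceeds $0$, i.e.\ that $\Psi_B^2>\Psi_A\,\Psi_{C_0}$; since $N$ is an integer this forces $N\ge1$. (If instead $\Psi_{C_1}<0$ one gets an upper bound on $N$, which is not what we want, so part of the check is confirming the favorable sign; this is exactly the kind of computation the authors have automated in \cite{BPR-web}.) The main obstacle I anticipate is not conceptual but arithmetic: the four $\Psi$-values are unwieldy rationals, and the whole argument hinges on getting these substitutions exactly right so that the strict inequality $\Psi_B^2>\Psi_A\,\Psi_{C_0}$ genuinely holds rather than landing at equality or the wrong side. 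I would therefore lean on the SageMath worksheets of \cite{BPR-web} to carry out the exact rational evaluation and certify the strict inequality, keeping the verification purely symbolic to avoid floating-point ambiguity near the threshold.
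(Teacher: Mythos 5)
Your proposal is correct and is essentially the paper's own argument: the authors also obtain the corollary by instantiating Theorem~\ref{K4thm} at $t=4$ for $(76,30,8,14)$, evaluating with $Z_{18,4}(\xi)=54-2160\xi^2+7920\xi^4$ (which matches your $\tfrac{3}{2}C_4^{(8)}$), and reading off a positive lower bound on $N$ --- in fact they record the sharper conclusion $N\ge\tfrac{2128}{55}$, hence $N\ge 39$, with the exact rational arithmetic delegated to the worksheets of~\cite{BPR-web} just as you suggest.
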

More precisely, the bound from Theorem~\ref{K4thm} provides us with $N\ge\frac{2128}{55}$, so $N\ge 39$.
In this case, in~\eqref{fla-vv}--\eqref{fla-ee1} we have $Z_{g,t}(\xi)=Z_{18,4}(\xi) = 54 - 2160 \xi^2 + 7920 \xi^4$.

\section{Reduction to $SRG(40,12,2,4)$ or $16$-coclique or $K_{6,10}$ as a subgraph}\label{sect-reduction}

Theorem~\ref{main} follows immediately from the next four lemmas. Recall that $N(z)$ and $N'(z)$ are the sets of neighbors and non-neighbors of a vertex $z$, respectively.

\begin{lemma}\label{reduction}
If $G$ is a $SRG(76,30,8,14)$, then there is a subgraph $\wt G$ of $G$ satisfying one of the following statements:\\
(i) $\wt G$ is a $SRG(40,12,2,4)$, and for any $z\in G\setminus\wt G$ both $N(z)\cap \wt G$ and $N'(z)\cap \wt G$ are $4$-regular subgraphs on $20$ vertices, and $|N(z_1)\cap N(z_2)\cap\wt G|=8$ for any adjacent $z_1,z_2\in G\setminus\wt G$; or\\
(ii) $\wt G$ is a $16$-coclique; or\\
(iii) $\wt G$ is a $K_{6,10}$.
\end{lemma}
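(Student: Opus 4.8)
The plan is to build the large subgraph out of a single $K_4$, whose existence is guaranteed by Corollary~\ref{K4inour}. I would fix such a clique on $H=\{1,2,3,4\}$. Since $G$ has no $K_5$ (Corollary~\ref{noK5}), no external vertex is adjacent to all four clique vertices, i.e.\ $b_4=0$ in the notation of Lemma~\ref{freq}; as each vertex of $H$ has exactly $3$ neighbors in $H$, Corollary~\ref{freq-cor}~(i) applies and forces $(b_j)=(0,36,36,0,\dots)$. In particular $b_3=0$: no external vertex has exactly three neighbors in $H$, which is precisely the assertion that $G$ has no $\Kfive$ (Proposition~\ref{noK5edge}), since any $\Kfive$ would exhibit a $K_4$ together with an external vertex joined to exactly three of its four vertices. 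Hence every vertex of $G\setminus H$ is adjacent to exactly one or exactly two vertices of $H$, splitting $G\setminus H$ into sets $A_i$ (adjacent to $i$ only, $36$ vertices in total) and $B_{ij}$ (adjacent to $\{i,j\}$ only, $36$ vertices in total).

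Next I would pin down the sizes and local degrees by counting along the clique. Each edge of $H$ has $\lambda=8$ common neighbors, two of which lie in $H$, so $|B_{ij}|=6$; counting the neighbors of a fixed clique vertex then yields $|A_i|=9$. The decisive computation is that for $x\in A_i$, writing $|N(x)\cap N(i)|=\lambda$ together with $|N(x)\cap N(i')|=\mu$ for the three $i'\ne i$ and solving the resulting linear relations forces $x$ to have exactly $11$ neighbors inside $A:=\bigcup_iA_i$ (hence $18$ in $B:=\bigcup B_{ij}$). Consequently $\wt G:=H\cup A$ is a $12$-regular graph on $40$ vertices in which two clique vertices have exactly two common neighbors, and every external $z\in B$ has exactly $20$ neighbors and $20$ non-neighbors in $\wt G$ (the two clique vertices it meets, plus $18$ vertices of $A$ on each side). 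This already reproduces the numerology of alternative~(i).

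The heart of the argument is then to determine the edge distribution among and inside the $A_i$. The target pattern---each $A_i$ inducing a $2$-regular graph and each vertex of $A_i$ having exactly three neighbors in every other $A_{i'}$---is exactly what upgrades $\wt G$ to an $SRG(40,12,2,4)$ (the parameters are feasible, and one then checks $\lambda'=2$, $\mu'=4$ directly, e.g.\ a clique--$A$ non-edge has common neighbors $1+3=4$). I would force this pattern with the determinant tool: applying Lemma~\ref{detM} via Lemma~\ref{detMinfo} to partitions refining $\{H,A_1,A_2,A_3,A_4\}$ and to small ambient configurations rules out the non-regular distributions, while the boundary cases $\det M(\pi,p,q)=0$ feed into Lemma~\ref{lincomb} to extract the exact linear dependencies among the representation vectors. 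When the regular pattern is forced, $\wt G$ is the required $SRG(40,12,2,4)$ (its span has dimension $16$ by Lemma~\ref{rank-SRG40}), and the remaining assertions of~(i)---that $N(z)\cap\wt G$ and $N'(z)\cap\wt G$ are $4$-regular on $20$ vertices, and that adjacent $z_1,z_2\in B$ have all $8$ common neighbors inside $\wt G$---should follow from the established regularity of $\wt G$ together with further common-neighbor counts between $B$-vertices and $\wt G$ in $G$. In the exceptional configurations I expect the same degeneracy analysis to expose $16$ pairwise non-adjacent vertices (alternative~(ii), with the span of dimension $16$ by Lemma~\ref{rank-K16-K610}) or a $K_{6,10}$ (alternative~(iii)).

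The main obstacle is precisely this last case analysis: proving that the admissible edge distributions among the $A_i$ are exactly the regular one, giving the strongly regular $\wt G$, and the degenerate ones that force a $16$-coclique or a $K_{6,10}$. This is where $\det M\ge0$ and the linear-dependence relations of Lemma~\ref{lincomb} must be pushed hardest, and where the integrality and range restrictions on the various edge counts (as exploited in Corollary~\ref{freq-cor}) are used repeatedly to discard infeasible possibilities.
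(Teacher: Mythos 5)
Your setup matches the paper's opening moves exactly: fixing a $K_4$, using Corollary~\ref{noK5} and Corollary~\ref{freq-cor}~(i) to get $(b_j)=(0,36,36,0,\dots)$, deducing Proposition~\ref{noK5edge}, and establishing the sizes $|B_{ij}|=6$, $|A_i|=9$ and the $11$/$18$ degree split (this is the paper's equitable partition $\{G_0,G_1,G_2\}$ with $G_1=\bigcup A_i$, $G_2=\bigcup B_{ij}$). All of that is correct. But everything after that point --- which is the actual content of the lemma --- is missing. The paper's proof hinges on a concrete dichotomy that your proposal never identifies: whether $B=G_2$ contains a triangle. If $B$ is triangle-free, an averaging argument over the subgraphs $N(x)\cap B$ combined with $\det M\ge 0$ forces each such subgraph to have exactly $36$ edges, and the resulting degenerate Gram matrices feed Lemma~\ref{lincomb} to pin down common-neighbor counts, yielding alternative~(i). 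If $B$ contains a triangle $G_3$, the proof branches on the $4$-tuple recording how many neighbors each vertex of the $K_4$ has in $G_3$ --- subcases $(3,3,0,0)$, $(3,2,1,0)$, $(3,1,1,1)$, $(2,2,2,0)$, $(2,2,1,1)$ with further sub-subcases --- each requiring specific partitions, determinant computations, and kernel vectors to exhibit the $16$-coclique or the $K_{6,10}$. Your phrase ``in the exceptional configurations I expect the same degeneracy analysis to expose'' the cocliques or $K_{6,10}$ is a statement of hope, not an argument; this case analysis is most of the proof.

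There are also two substantive errors in what you do assert. First, the ``target pattern'' (each $A_i$ being $2$-regular with three neighbors in every other $A_{i'}$) is a necessary consequence of $\wt G$ being an $SRG(40,12,2,4)$, but it is not sufficient: it controls common neighbors only for pairs involving a clique vertex, and says nothing about two vertices inside $A$, so ``one then checks $\lambda'=2$, $\mu'=4$ directly'' does not go through. Second, the extra conditions in alternative~(i) --- that $N(z)\cap\wt G$ is $4$-regular and that adjacent $z_1,z_2\in B$ have all $8$ common neighbors inside $\wt G$ --- cannot ``follow from the established regularity of $\wt G$'': if $z_1,z_2\in B$ are adjacent and have a common neighbor in $B$, that is a triangle in $B$, so this condition is equivalent to information about $B$ itself, which is exactly why the paper's triangle-free hypothesis in Case~1 is indispensable (the paper also needs a Euclidean argument, $X\cdot X=0$ with $X=\sum_{i\in N(z)\cap\wt G}x_i$, to upgrade the edge count of $N(z)\cap\wt G$ to $4$-regularity). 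So the proposal reproduces the first third of the proof correctly and then leaves a genuine gap where the lemma's trichotomy is actually decided.
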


Recall that $n$-coclique is a graph with $n$ vertices without edges, and $K_{m,n}$ is the complete bipartite graph.

\begin{lemma}\label{srg40}
If $G$ is a $SRG(76,30,8,14)$, there is no induced subgraph $\wt G\subset G$ which is a $SRG(40,12,2,4)$, and such that for any $z\in G\setminus\wt G$ both $N(z)\cap \wt G$ and $N'(z)\cap \wt G$ are $4$-regular subgraphs on $20$ vertices, and $|N(z_1)\cap N(z_2)\cap\wt G|=8$ for any adjacent $z_1,z_2\in G\setminus\wt G$.
\end{lemma}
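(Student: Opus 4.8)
The plan is to assume such a $\wt G$ exists and to work in the Euclidean representation of $G$ in $\R^{18}$, where $(p,q)=(-\tfrac4{15},\tfrac7{45})$ and $s=kp=-8$, exploiting that $\wt G$ nearly fills the whole space. Set $H:=G\setminus\wt G$, so $|H|=76-40=36$. First I would record the combinatorics of $H$ forced by the hypotheses. Since every $z\in H$ has exactly $20$ neighbors in $\wt G$, it has $30-20=10$ neighbors in $H$, so $H$ is $10$-regular. If $z_1,z_2\in H$ are adjacent then their $\lambda=8$ common neighbors all lie in $\wt G$ (because $|N(z_1)\cap N(z_2)\cap\wt G|=8$), so $z_1,z_2$ have no common neighbor inside $H$; hence $H$ is triangle-free.

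By Lemma~\ref{rank-SRG40} the vectors $\{x_i:i\in\wt G\}$ span a $16$-dimensional subspace of $\R^{18}$; let $L$ be its $2$-dimensional orthogonal complement and, for $z\in H$, let $x_z''$ be the orthogonal projection of $x_z$ onto $L$. For each $z$ the pair $\{N(z)\cap\wt G,\,N'(z)\cap\wt G\}$ is an equitable partition of $\wt G$ (both parts are $4$-regular on $20$ vertices with inter-part degree $8$), so Proposition~\ref{proj-coefs} computes the projection explicitly and shows that $\|x_z''\|^2$ is a positive constant independent of $z$; moreover $\sum_{z\in H}x_z''=0$ by \eqref{design}, since the $\wt G$-summands project to $0$. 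Thus the $x_z''$ are equal-length nonzero vectors summing to $0$, and so they span $L$. The crucial identity is \eqref{vertex-through-neighbors}: from $x_z=-\tfrac18\sum_{w\in N(z)}x_w$, and since the $\wt G$-neighbors project to $0$, we obtain $x_z''=-\tfrac18\sum_{w\in N(z)\cap H}x_w''$. Hence the $\R^2$-valued map $z\mapsto x_z''$ is an eigenvector of the adjacency matrix $A_H$ for the eigenvalue $s=-8$ whose image spans $L$, so $-8$ is an eigenvalue of $A_H$ of multiplicity at least $2$; restricting \eqref{design} to $L$ also shows the $x_z''$ form an equal-norm, isotropic system, i.e. a spherical $2$-design in $L$.

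Next I would constrain the full spectrum of $A_H$. As a $36\times36$ principal submatrix of $A_G$, whose spectrum is $30^{(1)},2^{(57)},(-8)^{(18)}$, Cauchy interlacing forces: the top eigenvalue of $A_H$ is $10$ (simple, as $H$ is connected and $10$-regular); at least seventeen eigenvalues equal $2$; and all remaining eigenvalues lie in $[-8,2]$ with minimum exactly $-8$. Connectivity is free, since a disconnected $H$ would have a component with fewer than $2\cdot10=20$ vertices, impossible for a triangle-free $10$-regular graph. Combined with the trace identities $\operatorname{tr}A_H=0$, $\operatorname{tr}A_H^2=360$ and $\operatorname{tr}A_H^3=0$ (the last by triangle-freeness), this is very rigid: if $A_H$ has only four distinct eigenvalues, solving the three identities forces them to be $10,2,-2,-8$ with multiplicities $1,18,15,2$.

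The main obstacle is the endgame, because the constraints above are self-consistent at the level of the spectrum (the candidate spectrum passes all three trace tests), so a contradiction must use the finer rigidity we have assembled rather than eigenvalues alone. Two complementary levers are available. First, since $\dim L=2$ matches the guaranteed multiplicity of $-8$, one can try to prove that $A_H$ has no fifth eigenvalue — for instance by combining triangle-freeness (the matrix $A_H^2-10I$ is supported on non-edges) with the equitable structure between $\wt G$ and $H$ to force $A_H$ to satisfy a degree-four polynomial — thereby pinning $H$ to the distance-regular candidate above and invoking nonexistence of such a graph. Second, and more in the spirit of the paper, every $(-8)$-eigenvector of $A_H$ arising from $L$ extends to a $(-8)$-eigenvector of $A_G$ supported on $H$, which means it must be orthogonal to each neighborhood indicator $\chi_{N(t)\cap H}$, $t\in\wt G$; together with the spherical-design property of the $x_z''$ these over-determined linear relations should be contradicted by a single application of Lemma~\ref{detM} to a small explicit sub-configuration — say a few vertices of $H$ together with a suitable part of $\wt G$ — whose Gram matrix $M(\pi,p,q)$ then has negative determinant. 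Making either route effective is where the real work lies.
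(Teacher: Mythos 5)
Your setup is sound and matches the paper's own strategy: by Lemma~\ref{rank-SRG40} the complement $L$ of $\lin\{x_i,\,i\in\wt G\}$ is $2$-dimensional, the equitable partition $\{N(z)\cap\wt G,\,N'(z)\cap\wt G\}$ makes the projections $x_z''$ computable and of equal norm, and $\sum_{z\in H}x_z''=0$. Your additional observations ($H$ is $10$-regular, triangle-free, connected; $A_H x'' = -8x''$ so $-8$ has multiplicity $\ge 2$; interlacing pins at least seventeen eigenvalues at $2$) are all correct. But the proof is not finished, and you say so yourself (``Making either route effective is where the real work lies''). Neither of your two levers works as stated. The first requires showing $A_H$ has exactly four distinct eigenvalues and then ``invoking nonexistence of such a graph''; a connected regular graph with four distinct eigenvalues need not be distance-regular, no nonexistence result is cited, and you already observed the candidate spectrum $10^{(1)},2^{(18)},(-2)^{(15)},(-8)^{(2)}$ passes every trace test, so spectral data alone cannot close the argument. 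The second lever is self-defeating: the orthogonality of the extended $(-8)$-eigenvectors to each $\chi_{N(t)\cap H}$, $t\in\wt G$, is not an extra constraint to be violated --- it follows automatically from projecting~\eqref{vertex-through-neighbors} at $t$ onto $L$ (which gives $\sum_{z\in N(t)\cap H}x_z''=0$), so there is nothing there to contradict, and ``a single application of Lemma~\ref{detM} to a small explicit sub-configuration'' is never exhibited.

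The missing idea is the one ingredient of the hypothesis you only used weakly: $|N(z_1)\cap N(z_2)\cap\wt G|=8$ for adjacent $z_1,z_2\in H$. You used it to get triangle-freeness, but the paper uses it quantitatively. Via Propositions~\ref{proj-dot-prod} and~\ref{cosine-comp} one gets, for the normalized projections $x_j'''$, that adjacent pairs satisfy $x_{j^{(1)}}'''\cdot x_{j^{(2)}}'''=-\frac45$ and non-adjacent pairs satisfy $x_{j^{(1)}}'''\cdot x_{j^{(2)}}'''=-\frac{3}{10}n_{j^{(1)},j^{(2)}}+\frac{17}{5}$. Since $L$ is a plane, two vectors at cosine $-\frac45$ from a fixed $x_j'''=(1,0)$ must each be $(-\frac45,\pm\frac35)$; if both signs occurred among neighbors of $j$, those two would have mutual dot product $\frac7{25}$, forcing $n=\frac{52}{5}$, not an integer. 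Propagating this (and using that non-adjacent vertices of $H$ with no common neighbor in $H$ have $n=14$, hence cosine $-\frac45$ as well) shows $\{x_i''':i\in H\}$ takes only \emph{two} values, and two non-antipodal unit vectors cannot have a nontrivial nonnegative combination equal to zero --- contradicting $\sum_{z\in H}x_z''=0$. This integrality-plus-planar-geometry step is the heart of the paper's proof and is absent from your proposal.
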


\begin{lemma}\label{16coclique}
$16$-coclique cannot be an induced subgraph of $SRG(76,30,8,14)$.
\end{lemma}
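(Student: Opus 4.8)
The plan is to assume a $16$-coclique $\wt G\subset G$ exists and derive a contradiction by combining the rank information from Lemma~\ref{rank-K16-K610} with the projection machinery of Section~\ref{tools}. Since $\wt G$ is a $16$-coclique with $p=-\frac4{15}$, $q=\frac7{45}$, Lemma~\ref{rank-K16-K610} tells us that $\rank(B(\wt G))=16$, so the vectors $\{x_i:i\in\wt G\}$ span a full $16$-dimensional subspace of $\R^{18}$. This means the orthogonal complement of $\lin\{x_i:i\in\wt G\}$ inside $\R^{18}$ is only $2$-dimensional. For any vertex $z\in G\setminus\wt G$, the orthogonal component $x_z''=x_z-x_z'$ lives in this $2$-dimensional space, which is precisely the kind of low-dimensional reduction advertised in the Projections subsection.

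First I would analyze the edge-count structure between an outside vertex $z$ and the coclique. For $z\in G\setminus\wt G$, let $e_z=|N(z)\cap\wt G|$ be the number of neighbors of $z$ inside the coclique. Applying Lemma~\ref{freq} (or its Corollary~\ref{freq-cor}) with $m=16$ and $(d_j)=(16,0,0,\dots)$ (all $d_0=16$ since $\wt G$ has no internal edges), I would determine the admissible distribution of the $b_j$; the three linear relations \eqref{l10}--\eqref{l12} pin down $\sum b_j$, $\sum jb_j$, and $\sum\binom j2 b_j$, which combined with the Euclidean constraints below should force $e_z$ into a narrow range. In particular, since $\wt G$ is an equitable one-part ``partition'' only if $z$ sees a constant number of vertices, I would instead fix each $z$ and use Proposition~\ref{proj-coefs}: the projection $x_z'$ is a combination $\sum_{i\in\wt G}\alpha_i x_i$ where the coefficients split according to whether $i\in N(z)$ or not. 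Because the coclique has no internal structure, the system \eqref{syst-proj-coef-general} reduces to a $2\times2$ system (neighbors vs.\ non-neighbors of $z$), solvable explicitly, giving $\|x_z''\|^2=1-x_z\cdot x_z'$ as an explicit function of $e_z$.

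The crucial step is the following packing obstruction. Each outside vertex $z$ (there are $76-16=60$ of them) projects to a vector $x_z''$ in a fixed $2$-dimensional space, and Proposition~\ref{cosine-comp} lets me compute the pairwise cosines $\frac{x_{z_1}''\cdot x_{z_2}''}{\|x_{z_1}''\|\,\|x_{z_2}''\|}$ in terms of $x_{z_1}\cdot x_{z_2}\in\{p,q\}$ and the quantity $n_{z_1,z_2}=|N(z_1)\cap N(z_2)\cap\wt G|$ via \eqref{cosine-formula}. The key tension is that in a $2$-dimensional space one cannot have too many unit vectors with prescribed angles: for instance, if many of the $x_z''$ had equal norm and the forced cosines were bounded away from $1$, one gets a contradiction with the fact that at most a bounded number of vectors can fit on a circle with minimum angular separation. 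I expect the cleanest route is to count how many distinct directions the $x_z''$ can occupy and match this against the $60$ outside vertices and the design condition \eqref{design}, which constrains $\sum_z (x_z''\cdot y)^2$ for unit $y$ in the $2$-dimensional complement.

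The main obstacle will be controlling the quantity $n_{z_1,z_2}$ so that the cosine formula \eqref{cosine-formula} yields a genuinely small finite set of possible angles; this requires showing that $e_z$ and the common-neighbor counts $n_{z_1,z_2}$ take only a few values, which in turn rests on the integrality and non-negativity constraints from Lemma~\ref{freq} together with the determinant condition $\det M(\pi,p,q)\ge0$ (Lemma~\ref{detM}) applied to cleverly chosen partitions containing $z_1,z_2$ and blocks of the coclique. Once the angles are pinned down to a short list, the contradiction becomes a finite combinatorial check---exactly the kind of ``very insignificant verification'' the introduction promises can be delegated to a short SageMath computation, confirming that no valid configuration of $60$ outside vectors exists in the $2$-dimensional orthogonal complement.
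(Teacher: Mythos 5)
Your setup follows the paper's actual proof quite closely: the rank-$16$ computation, the $2$-dimensional orthogonal complement, the projection coefficients via Proposition~\ref{proj-coefs} (the partition $\{N(z)\cap\wt G,\,N'(z)\cap\wt G\}$ is automatically equitable since the coclique has no edges), and the cosine formula~\eqref{cosine-formula} reducing the angles to a short list. Your idea of pinning down $e_z=|N(z)\cap\wt G|$ via Lemma~\ref{freq} also works, and is even a nice alternative to the paper's dual-representation argument: with $m=16$ and $d_0=16$ one gets $\sum b_j=60$, $\sum jb_j=480$, $\sum\binom j2 b_j=1680$, so $\sum j^2b_j=3840=480^2/60$; equality in Cauchy--Schwarz forces $e_z=8$ for every outside vertex.

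The genuine gap is your endgame. You expect the contradiction to come from a packing obstruction: too many vectors in a $2$-dimensional space with prescribed pairwise angles, checked against the design condition~\eqref{design}. But no such obstruction exists. Carrying out the computation (as the paper does) one finds $x_{z_1}'''\cdot x_{z_2}'''\in\{1,-\tfrac12\}$, so the $60$ normalized orthogonal components occupy exactly \emph{three} directions at mutual angle $120^\circ$, with $20$ vertices in each class $H_1,H_2,H_3$. This configuration satisfies $\sum_z x_z'''=0$ and the $2$-design condition identically, by symmetry; nothing about counting vectors on a circle is violated, so the ``finite combinatorial check'' you defer to at the end has nothing to contradict. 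The paper's proof must instead return to the combinatorics of the original graph: it proves each class $H_1$ is $2$-regular, hence a union of cycles; uses Lemma~\ref{rank-cycles} together with the fact that all of $H_1$ shares one projection (so $\rank(\lin\{x_j:j\in H_1\})\le 17$) to force at least four cycles and hence a $4$-cycle; encodes each vertex of $H_1$ by its $8$-element neighbor set inside the $16$-coclique, with intersection sizes $2$ or $4$ according to adjacency; and finally shows by a computer clique-number search (on a graph $\M_1$ with $906$ vertices and $176672$ edges) that no $16$ such subsets can coexist with a fixed $4$-cycle, whereas $|H_1\setminus C_4|=16$ would require them to. This structural descent and the nontrivial computation are the actual content of the lemma, and your proposal contains no substitute for them.
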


\begin{lemma}\label{K610}
$K_{6,10}$ cannot be an induced subgraph of $SRG(76,30,8,14)$.
\end{lemma}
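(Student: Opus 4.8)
The plan is to assume $\wt G=K_{6,10}$ is an induced subgraph, with parts $A$ ($|A|=6$) and $B$ ($|B|=10$), and to extract from the Euclidean representation in $\R^{18}$ enough rigidity to force a contradiction. By Lemma~\ref{rank-K16-K610} the Gram matrix $B(\wt G)$ has rank $15$, so the $16$ vectors $\{x_i:i\in\wt G\}$ satisfy a one-dimensional space of linear relations. Since the automorphisms of $K_{6,10}$ act transitively on $A$ and on $B$, the kernel vector is constant on each part, and a direct computation with $(p,q)=(-\frac4{15},\frac7{45})$ identifies the relation as $3X_A+2X_B=0$, where $X_A=\sum_{a\in A}x_a$ and $X_B=\sum_{b\in B}x_b$ (one checks $\|3X_A\|^2=\|2X_B\|^2=96$ and $(3X_A)\cdot(2X_B)=-96$). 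I would then feed this relation into Lemma~\ref{lincomb}: writing $e_A(z)=|N(z)\cap A|$ and $e_B(z)=|N(z)\cap B|$ for an external vertex $z$, the identity $x_z\cdot(3X_A+2X_B)=0$ collapses to $3e_A(z)+2e_B(z)=14$. The constraints $0\le e_A\le6$, $0\le e_B\le10$ leave only three vertex types, $(e_A,e_B)\in\{(0,7),(2,4),(4,1)\}$.

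Next I would pin down the type distribution by elementary counting. Let $n_1,n_2,n_3$ count the three types. There are $76-16=60$ external vertices, so $n_1+n_2+n_3=60$. Each $a\in A$ has $30-10=20$ external neighbors, giving $2n_2+4n_3=6\cdot20=120$. Finally, each of the $\binom62=15$ pairs in $A$ is non-adjacent with $\mu=14$ common neighbors, of which $10$ lie in $B$, so each pair has exactly $4$ external common neighbors; counting pairs of $A$-neighbors over external vertices yields $\sum_z\binom{e_A(z)}2=n_2+6n_3=15\cdot4=60$. These three equations force $n_1=n_3=0$ and $n_2=60$: \emph{every} external vertex has exactly two neighbors in $A$ and four in $B$. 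Several further regularities then come for free and sharpen the picture: the external subgraph is $24$-regular on $60$ vertices, the map sending each external vertex to its pair of $A$-neighbors distributes the $60$ vertices $4$ per edge of the complete graph on $A$, and since in an SRG the neighborhood of any vertex induces a $\lambda$-regular graph, $N(a)\setminus B$ is a $4$-regular graph on $20$ vertices for each $a\in A$.

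To close the argument I would pass to the orthogonal complement $W$ of $\lin\{x_i:i\in\wt G\}$ inside $\R^{18}$, which has dimension $18-15=3$. Writing $x_z=x_z'+x_z''$ with $x_z'\in W^\perp$ and $x_z''\in W$, the second design identity in~\eqref{design} restricted to unit $y\in W$ gives $\sum_{z}(x_z''\cdot y)^2=76/18$ (vertices of $\wt G$ contribute nothing, their projections onto $W$ vanishing), so the $60$ vectors $x_z''$ form a tight frame, $\sum_z x_z''(x_z'')^{\mathsf T}=\frac{76}{18}I_3$. Because all external vertices share the same type, Proposition~\ref{proj-coefs} provides a single projection formula and hence a common norm $\|x_z''\|$; after normalization the $x_z''$ become $60$ unit vectors on $S^2\subset\R^3$ whose pairwise inner products are governed, via Propositions~\ref{proj-dot-prod} and~\ref{cosine-comp}, by the adjacency of $z,z'$ (contributing $p$ or $q$) together with a small number of edge-counts between the intersected equitable partitions. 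Thus the external vertices must realize $60$ unit vectors in $\R^3$ whose Gram matrix has rank $3$ with the tight-frame spectrum $\{20^{(3)},0^{(57)}\}$ and only finitely many admissible inner-product values.

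The main obstacle is this final step: the external-to-external adjacency pattern is not determined by the counting above, and controlling it is genuinely delicate. My plan is to treat it as a finite search, exactly as the paper does for its last two cases. The $\det M\ge0$ tests of Lemma~\ref{detM} applied to configurations consisting of $\wt G$ together with a few external vertices, the rank-$3$ constraint on the frame in $W$, and the explicit inner-product formulas together cut the possibilities down to a finite list, which a short computer verification shows is empty. I would expect most of the difficulty (and essentially all of the computer time) to lie in organizing this search so that the strong regularities derived above keep it small, rather than in any single clean inequality.
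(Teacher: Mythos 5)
Your first half is correct, and it actually improves on the paper's route to the key structural fact: the kernel relation $3X_A+2X_B=0$ and the resulting equation $3e_A(z)+2e_B(z)=14$ coincide with what the paper derives (the paper's $57|N(j)\cap\wt G_1|+38|N(j)\cap\wt G_2|=266$ is the same equation up to the factor $19$), but where the paper needs a second linear equation from the dual representation in $\R^{57}$ to isolate $(e_A,e_B)=(2,4)$, your double counting (edges leaving $A$, and common external neighbors of the $15$ non-adjacent pairs in $A$) eliminates the types $(0,7)$ and $(4,1)$ purely combinatorially. The tight-frame identity in the $3$-dimensional complement $W$ is also sound and is essentially how the paper later counts class sizes.

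The genuine gap is the final step, and the paper's own proof shows that your expected outcome is wrong in a specific way: local $\det M\ge 0$ tests on $\wt G$ plus ``a few external vertices,'' together with the rank-$3$ and inner-product constraints, do \emph{not} empty the list. What the paper does is: (1) compute that the normalized projections $x_j'''$ have pairwise inner products in $\{-1,0,1\}$, hence sit at vertices of an octahedron; (2) fix one axis, let $H_1,H_2$ be the two antipodal classes, and use the design property to get $|H_1|=|H_2|=10$; (3) use~\eqref{vertex-through-neighbors} to get $|N(t)\cap H_2|=8+|N(t)\cap H_1|$ for $t\in H_1$ (and symmetrically), so that inverting the bipartite edges between $H_1$ and $H_2$ yields a $2$-regular graph, i.e.\ a union of cycles, and bound the number of edges inside $H_1$ by $3$; this makes the enumeration feasible ($5526$ candidate $20$-vertex graphs). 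Even after all Gram-matrix rank and positive-semidefiniteness tests, \emph{four} candidates survive, and the contradiction requires a second stage: adjoining a vertex $t\in\wt G_1$ with at most $3$ neighbors in each $H_i$ and verifying, over $66104$ graphs on $21$ vertices, that the rank of the Gram matrix always exceeds the bound $16$ forced by~\eqref{rank_hhg}. So the concluding contradiction is not a routine ``short verification that the list is empty''; it depends on structural reductions you have not made (octahedron, class sizes, cycle structure, the extra vertex from $\wt G_1$), and your sketch as stated --- testing configurations of $\wt G$ with a few external vertices --- would terminate without a contradiction.
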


In this section we will prove Lemma~\ref{reduction} only.

\begin{proof}[Proof of Lemma~\ref{reduction}.]

Let $G$ be a $(76,30,8,14)$ strongly regular graph. If $H$ is an induced subgraph of $G$, $m=|H|$, we define
\begin{align}\label{dj_def}
d_j&:=|\{x\in H:\text{there are exactly }j\text{ edges from }x\text{ to vertices in }H\}|\\
b_j&:=|\{x\in G\setminus H:\text{there are exactly }j\text{ edges from }x\text{ to vertices in }H\}|.\label{bj_def}
\end{align}
Using strong regularity of $G$, it is straightforward to obtain the equations
\begin{align}
\label{l10}
\sum_{j\ge0} b_j &= 76-m, \\
\label{l11}
\sum_{j\ge0} j b_j &= 30m-\sum_{j\ge0} jd_j, \quad\text{and} \\
\label{l12}
\sum_{j\ge0} \binom j2 b_j &= 14 \binom m2 - \sum_{j\ge0} \binom j2 d_j - 3\sum_{j\ge0}jd_j.
\end{align}

If $H$ is a $K_4$, then $(d_j)_{j\ge0}=(0,0,0,4,0,\dots)$, and the above equations become  $\sum_{j\ge0} b_j = 72$, $\sum_{j\ge0} j b_j = 108$, and $\sum_{j\ge0} {j \choose 2} b_j = 36$, which can be combined to obtain $\sum_{j\ge0} (j-1)(j-2) b_j = 0$, so that $b_j = 0$ unless $j=1,2$, and then $b_1=b_2=36$. As $b_3=b_4=0$, $G$ cannot contain $K_5$ or $\Kfive$ as a subgraph, where $\Kfive$ denotes a $K_5$ with one edge removed. In what follows this fact will be used several times.

By Corollary~\ref{K4inour}, there is a $K_4$ in $G$, which we denote by $G_0$. For $j=1,2$, let $G_j$ be the subgraph of $G\setminus G_0$ such that each vertex of $G_j$ has exactly $j$ neighbors in $G_0$. Note that $G$ is partitioned into $G_0$, $G_1$, and $G_2$, and, as established in the previous paragraph, $|G_1|=|G_2|=36$. Further, strong regularity of $G$ implies the following: $G_1$ and $G_2$ are regular of degree $11$ and $10$, respectively; any vertex of $G_0$ has exactly $9j$ neighbors in $G_j$, $j=1,2$; any vertex of $G_j$ has precisely $18$ neighbors in $G_{3-j}$, $j=1,2$. Now we consider two cases depending on whether $G_2$ contains a triangle.

{\bf Case 1. $G_2$ contains no triangles.} Then we will show that $\wtG := G\setminus G_2$ is $SRG(40,12,2,4)$ and that other part of statement~(i) of Lemma~\ref{reduction} is satisfied. For a vertex $i\in G$, let $x_i\in\R^{18}$ be the image of $x$ under the Euclidean representation of $G$. Recall that in this case the dot product $x_i\cdot x_j$ is either $-\frac{4}{15}$ or $\frac{7}{45}$ when $i$ and $j$ are adjacent or not adjacent, respectively (see~\eqref{edges_pq}), and that for a subset $A$ of vertices of $G$, we set $\br{A}:=\sum_{i\in A}x_i$. For $x\in \wtG$, we have $|N(x)\cap G_2|=18$, and let $w$ be the number of edges in $N(x)\cap G_2$. The Gram matrix of $\br{G_0}$, $\br{N(x)\cap G_2}$, $\br{\{x\}}$ is 
(we omit the values below the main diagonal)
\begin{equation}\label{gg1}
\begin{pmatrix}
\frac45 & -4 & \frac15 \\
 & \frac{-38 w}{45} & \frac{-24}5 \\
 & & 1
\end{pmatrix},
\end{equation}
whose determinant is $\frac{722}{1125}(36-w)\ge0$. But since $G_2$ has no triangles,
the average value of $w$ over all vertices in $\wt G$  is $36$, so we always have $w=36$. Then the matrix~\eqref{gg1} is singular, and we find linear dependence
\[
\br{G_0}+\frac14\br{N(x)\cap G_2}+\br{\{x\}}=0.
\]
Multiplying this equation by $\br{\{z\}}$ for $z\in\wtG$, we find that the number of neighbors of $z$ in $N(x)\cap G_2$ is equal to $6$ or $10$ when $z$ is adjacent or is not adjacent to $x$, respectively. This implies that $\wtG$ is $SRG(40,12,2,4)$.

To complete Case~1, it remains to show that the remaining part of statement~(i) is valid, i.e., that for any $z\in G_2$ both $N(z)\cap \wtG$ and $N'(z)\cap \wtG$ are $4$-regular subgraphs on $20$ vertices, and that $|N(z_1)\cap N(z_2)\cap\wtG|=8$ for any adjacent $z_1,z_2\in G_2$. The latter is immediate since $G_2$ has no triangles, so all $8$ common neighbors of $z_1$ and $z_2$ are in $\wtG$. For the former, simple count shows  $|N(z)\cap \wtG|=|N'(z)\cap \wtG|=20$ and that there are $40$ edges in $N(z)\cap \wtG$. Now we let $z_i\in\R^{15}$ be the image of a vertex $i\in\wt G$ under the dual Euclidean representation of $SRG(40,12,2,4)$, in which case the dot product $z_i\cdot z_j$ is either $\frac{1}{6}$ or $\frac{-1}{9}$ when $i$ and $j$ are adjacent or not adjacent, respectively. With $\ubr{N(z)\cap \wtG}:=\sum_{i\in N(z)\cap \wtG}z_i$, we obtain $\ubr{N(z)\cap \wtG}\cdot \ubr{N(z)\cap \wtG}=0$, hence  $\ubr{N(z)\cap \wtG}\cdot z_i=0$ for any $i\in N(z)\cap \wtG$, implying $4$-regularity of $N(z)\cap \wtG$. For $N'(z)\cap \wtG$, the arguments are similar.

{\bf Case 2. $G_2$ has a triangle $G_3$.} There will be several subcases depending on the edge structure between $G_0$ and $G_3$. For each vertex in $G_0$, consider the number of its neighbors in $G_3$, and record the resulting $4$-tuple in descending order. The sum of all entries of such a $4$-tuple is always $6$, and each entry does not exceed $3$. We consider the cases in the reverse lexicographical order of the corresponding $4$-tuples.

{\bf Subcases $(3,3,0,0)$ and $(3,2,1,0)$} are impossible as $G$ cannot contain a $K_5$ or a $\Kfive$ as a subgraph.

{\bf Subcase $(3,1,1,1)$.} We will find a $K_{6,10}$ as a subgraph of $G$.

Take $H=G_0\cup G_3$, then in terms of~\eqref{dj_def}, we have $(d_j)_{j\ge 0}=(0,0,0,0,6,0,1,\dots)$. In terms of~\eqref{bj_def}, using the structure of $G_1$ and $G_2$ and the fact that $G$ has no $K_5$ or $\Kfive$, we get $b_0=b_5=b_6=b_7=0$. Adding the equations~\eqref{l10}, \eqref{l11} and \eqref{l12} with the coefficients $3$, $-2$, and $1$, respectively, we obtain $b_1+b_4=0$, so $b_1=b_4=0$, and the resulting system has only one solution $b_2=27$, $b_3=42$.

Let $y\in G_0$ be the vertex of degree $6$ in $H$. Set $G_4=N(y)\cap (G\setminus H)$, then $|G_4|=24$, and we can decompose $G_4=G_5\cup G_6$, where $G_{3+j}=\{x\in G_4:|N(x)\cap H|=j\}$ for $j=2,3$.
One can compute that $|G_6|=6$ and that the determinant of the Gram matrix of $\br{G_6}$, $\br{H\setminus\{y\}}$, $\br{\{y\}}$ is
$-\frac{1444}{3375}w\ge0$, where $w$ is the number of edges in $G_6$. Therefore $w=0$, the matrix is singular, and
\begin{equation}\label{sub3111-2}
\br{G_6}+4\br{H\setminus\{y\}}+8\br{\{y\}}=0.
\end{equation}
Let $G_7=\{x\in G\setminus (H\cup G_5): |N(x)\cap H|=2\}$, then $|G_7|=b_2-|G_5|=27-18=9$, and each vertex in $G_7$ is non-adjacent to $y$ and has exactly $2$ neighbors in $H\setminus\{y\}$. For any $z\in G_7$, multiplying~\eqref{sub3111-2} by $\br{z}$, we find that $z$ is adjacent to any vertex of $G_6$. Clearly, $y$ is adjacent to all vertices of $G_6$ and not adjacent to any of the vertices of $G_7$. To conclude that the subgraph $G_6\cup G_7\cup\{y\}$ is $K_{6,10}$, it remains to show that there are no edges in $G_7\cup\{y\}$. This is straightforward by considering the determinant of the Gram matrix of $\br{G_7\cup \{y\}}$, $\br{G_6}$.

{\bf Subcase $(2,2,2,0)$.} We will find a $16$-coclique in $G$. Let $G_8=\{x\in G_0: |N(x)\cap (G_0\cup G_3)|=5\}$, then $|G_8|=3$, and the graph $H=G_8\cup G_3$ is $4$-regular on $6$ vertices. As before, we use notations~\eqref{dj_def} and~\eqref{bj_def}, so $(d_j)_{j\ge 0}=(0,0,0,0,6,0,\dots)$. For any $x\in G\setminus H$, let $w=|N(x)\cap H|$, then the determinant of the Gram matrix of $\br{H}$, $\br{\{x\}}$ is $-\frac{1}{2025}(19w-42)^2+\frac{8}{15}\ge0$, providing $1\le w\le3$, i.e., $b_0=b_4=b_5=\dots=0$. The solution of~\eqref{l10}-\eqref{l12} is $(b_j)_{j\ge0}=(0,0,54,16,\dots)$. Now let $G_9=\{x\in G\setminus H:|N(x)\cap H|=3\}$, as we have just found $|G_9|=16$. If $w$ is the number of edges in $G_9$, then the determinant of the Gram matrix of $\br{G_9}$, $\br{H}$ is $-\frac{304}{675}w\ge0$, so $w=0$, and $G_9$ is the required subgraph.

{\bf Subcase $(2,2,1,1)$.} We will find either a $16$-coclique or a $K_{6,10}$ in G. Take $H=G_0\cup G_3$. In notations~\eqref{dj_def} and~\eqref{bj_def}, we have $(d_j)_{j\ge 0}=(0,0,0,0,5,2,\dots)$. For any $x\in G\setminus H$, let $w=|N(x)\cap H|$, then the determinant of the Gram matrix of $\br{H}$, $\br{\{x\}}$ is $-\frac{1}{2025}(19w-42)^2+\frac{13}{15}\ge0$, so $1\le w\le4$, i.e., $b_0=b_5=b_6=\dots=0$. Adding the equations~\eqref{l10}-\eqref{l12} with the coefficients $3$, $-2$, and $1$, respectively, we obtain $b_1+b_4=1$, so $(b_1,b_4)$ is either $(0,1)$ or $(1,0)$. Solving the resulting systems of linear equations, we see that either $(b_j)_{j\ge0}=(0,0,28,40,1,\dots)$ or $(b_j)_{j\ge0}=(0,1,25,43,0,\dots)$.

\emph{First consider the case $(b_j)_{j\ge0}=(0,0,28,40,1,\dots)$}. Let $y\in G\setminus H$ be the vertex with exactly four neighbors in $H$. When $y\in G_2$, each of the two vertices of $G_{10}=N(y)\cap G_0$ has either $1$ or $2$ neighbors in $G_3$, so, using an analogous notation as for subcases, we need to consider one of the three situations $(2,2)$, $(2,1)$, and $(1,1)$.

\emph{For $y\in G_2$ and situation $(2,2)$,} observe that $G_{10}\cup\{y\}\cup G_0$ is $4$-regular on $6$ vertices, so repeating the arguments of the subcase $(2,2,2,0)$, we arrive at existence of a $16$-coclique in $G$.

\emph{For $y\in G_2$ and situation $(2,1)$,} we consider $H'=G_0\cup G_3\cup \{y\}$ and add primes in~\eqref{dj_def}-\eqref{l12} to denote the corresponding quantities and avoid confusion with $H$ fixed at the beginning of the current subcase. Then $(d_j')_{j\ge 0}=(0,0,0,0,3,4,1,\dots)$. For any $x\in G\setminus H'$, let $w=|N(x)\cap H'|$, then the determinant of the Gram matrix of $\br{H'}$, $\br{\{x\}}$ is $-\frac{1}{2025}(19w-56)^2+\frac{2}{3}\ge0$, so $2\le w\le4$, i.e., $b_0'=b_1'=b_5'=b_6'=\dots=0$. The system~\eqref{l10}-\eqref{l12} has unique solution $(b_j')_{j\ge0}=(0,0,7,56,5,\dots)$. For $G_{11}=\{x\in G\setminus H': |N(x)\cap H'|=4\}$, $|G_{11}|=5$, and considering the determinant of the Gram matrix of $\br{G_{11}}$, $\br{H'}$, we see that $G_{11}$ has no edges. Let $y_1$ be the vertex of $G_{10}$ with exactly two neighbors in $G_3$, and $w$ be the number of edges between $y_1$ and $G_{14}$, then the determinant of the Gram matrix of $\br{\{y_1\}}$, $\br{G_{11}}$, $\br{H'\setminus\{y_1\}}$ is
$-\frac{722}{6075}w^2 - \frac{1444}{3645}w\ge0$, so $w=0$. We obtained that $\{y_1\}\cup G_{11}$ is a $6$-coclique, next we wish to find $10$ vertices each connected to all vertices of $\{y_1\}\cup G_{11}$. Let $G_{12}=\{x\in G\setminus H': |N(x)\cap H'|=3\}$, for which $|G_{12}|=b_3'=7$. With $w=0$ in the corresponding Gram matrix, we have linear dependence
\begin{equation}\label{sub2211-2}
5\br{\{y_1\}}+\br{G_{11}}+4\br{H'\setminus\{y_1\}}=0.
\end{equation}
Multiplying this equation by $\br{z}$ for any $z\in G_{12}$, we find that $z$ is adjacent to $y_1$ and to all five vertices of $G_{11}$. It remains to find $3$ more vertices to form the desired $10$. The graph $H'$ has $3$ vertices of degree four, denote them by $G_{13}$. Using $\br{H'\setminus\{y_1\}}=\br{G_{13}}+\br{H'\setminus(\{y_1\}\cup G_{13})}$ in~\eqref{sub2211-2} and multiplying the result by $\br{z}$ for any $z\in G_{13}$, one can see that $z$ is adjacent to all vertices of $\{y_1\}\cup G_{11}$. Using the same argument as in the end of the subcase $(3,1,1,1)$, we obtain that there are no edges in the subgraph $G_{12}\cup G_{13}$, so $\{y_1\}\cup G_{11}\cup G_{12}\cup G_{13}$ is the required $K_{6,10}$ subgraph.

\emph{For $y\in G_2$ and situation $(1,1)$, or for $y\in G_1$,} we also consider $H'=G_0\cup G_3\cup \{y\}$. If $y\in G_1$, then $y$ is adjacent to all vertices of $G_3$ and to one vertex of $G_0$ which has one neighbor in $G_3$ (otherwise we can find a $\Kfive$). Therefore, regardless of whether we have $y\in G_2$ with situation $(1,1)$, or $y\in G_1$, we see that $(d_j')_{j\ge 0}=(0,0,0,0,2,6,\dots)$ and the two vertices of degree $4$ in $H'$ are not adjacent. As the total number of edges in $H'$ is the same as in the situation $(2,1)$ (namely, $19$), we argue similarly to obtain that $b_0=b_1=b_5=b_6=\dots=0$. The system~\eqref{l10}-\eqref{l12} yields the unique solution $(b_j')_{j\ge0}=(0,0,8,54,6,\dots)$. Let $G_{14}=\{x\in G\setminus H':|N(x)\cap H'|=4\}$, then $|G_{14}|=6$ and there are no edges in $G_{14}$ by considering the determinant of the Gram matrix of $\br{G_{14}}$, $\br{H'}$. Moreover, this matrix is singular and one has $\br{G_{14}}+4\br{H'}=0$. Multiplying this equation by $\br{z}$ for every $z\in G_{15}=\{x\in G\setminus H':|N(x)\cap H'|=2\}$, we verify that $z$ is adjacent to all vertices of $G_{14}$. We let the remaining required two vertices $G_{16}$ be the two vertices of $H'$ having degree $4$ in $H'$. Using that they are not adjacent and multiplying $\br{G_{14}}+4\br{H'\setminus G_{16}}+4\br{G_{16}}=0$ by $\br{z}$ for any $z\in G_{16}$, we obtain that $z$ is adjacent to all vertices of $G_{16}$. Arguing as before, $G_{14}\cup G_{15}\cup G_{16}$ is a $K_{6,10}$.

\emph{Now consider the case $(b_j)_{j\ge0}=(0,1,25,43,0,\dots)$.} Define $y\in G\setminus H$ as the vertex with exactly one neighbor in $H$. Let $G_{17}=\{x\in G_2\setminus G_3:|N(x)\cap G_3|=1\}$, then $|G_{17}|=24$ (each vertex of $G_3$ has $8$ neighbors in $G_2\setminus G_3$, with no common neighbors due to $b_4=b_5=0$). Clearly $y\in G_1$, and there are $18$ edges from $y$ to $G_2$, and in particular, at least $24-18=6$ vertices of $G_{17}$ are not adjacent to $y$. Let $G_{18}$ be any such $6$ vertices. The determinant of the Gram matrix of $\br{G_{17}}$, $\br{G_3}$, $\br{G_0}$, $\br{\{y\}}$ is $-\frac{13718}{50625}w\ge0$, so $w=0$, and we obtain linear dependence
\begin{equation}\label{sub2211-3}
\br{G_{17}}+4 \br{G_3}+6\br{G_0}-4\br{\{y\}}=0.
\end{equation}
Note that among $b_3=43$ vertices with exactly $3$ neighbors in $H$, there are $24$ ($G_{17}$) from $G_2$, and hence $19$ from $G_1$. But as $G_1$ is $11$-regular, there are at least $19-11=8$ vertices from these $19$ not adjacent to $y$. Denote any set of such $8$ vertices as $G_{19}$. For any $z\in G_{19}$, multiplying~\eqref{sub2211-3} by $\br{z}$, we get that $z$ has no neighbors in $G_{18}$. Let $G_{20}$ be the subgraph consisting of one vertex that has degree $5$ in $H=G_0\cup G_3$ and is not connected to $y$ (recall that $d_5=2$). Splitting $\br{G_0}=\br{G_0\setminus G_{20}}+\br{G_{20}}$ in~\eqref{sub2211-3} and multiplying the result by $\br{z}$ for $z\in G_{20}$, we obtain that $z$ is not adjacent to any vertex of $G_{18}$.
Considering the determinant of the Gram matrix of $\br{G_{19}}$, $\br{G_3}$, $\br{G_0}$, $\br{\{y\}}$, we obtain that there are no edges in $G_{19}$. To show that $\{y\}\cup G_{18}\cup G_{19}\cup G_{20}$ is a $16$-coclique, it only remains to verify that there are no edges between $G_{19}$ and $G_{20}$. This is straightforward considering the determinant of the Gram matrix of $\br{G_{20}}$, $\br{G_{19}}$, $\br{G_3}$, $\br{G_0\setminus G_{20}}$, $\br{\{y\}}$.
\end{proof}

\section{The case of $SRG(40,12,2,4)$}\label{big-srg}


In this section we prove Lemma~\ref{srg40}. We repeat its statement here for reader's convenience and to remind the additional structure which was obtained in the proof of Lemma~\ref{reduction} and is required here.
\begin{lemma-repeat}
If $G$ is a $SRG(76,30,8,14)$, there is no induced subgraph $\wt G\subset G$ which is a $SRG(40,12,2,4)$, and such that for any $z\in G\setminus\wt G$ both $N(z)\cap \wt G$ and $N'(z)\cap \wt G$ are $4$-regular subgraphs on $20$ vertices, and $|N(z_1)\cap N(z_2)\cap\wt G|=8$ for any adjacent $z_1,z_2\in G\setminus\wt G$.
\end{lemma-repeat}

\begin{proof}[Proof of Lemma~\ref{srg40}.]
Suppose $G$ and $\wt G$ satisfy the conditions of Lemma~\ref{srg40}. By Lemma~\ref{new-rank-lemma}~(i), $\rank B(\wt G)=\rank(\lin(\{x_i,\,i\in\wt G\}))=16$, where $x_i\in\R^{18}$ is the Euclidean representation of $i\in G$. The orthogonal complement of $\lin\{x_i,i\in\wt G\}$ in $\R^{18}$ is a two-dimensional space on which $x_j$, $j\in G\setminus \wt G$, will be orthogonally projected. But first, for $j\in G\setminus \wt G$, denote by $x_j'$ the orthogonal projection of $x_j$ onto $\lin\{x_i,i\in\wt G\}$. We have
\be\label{xj-prime}
x_j'=-\frac19\sum_{i\in N(j)\cap\wt G}x_i+\frac1{18}\sum_{i\in N'(j)\cap\wt G}x_i,
\ee
since $(x_j-x_j')\cdot x_t=0$ for any $t\in\wt G$.

Now fix $j_1,j_2\in G\setminus \wt G$ and partition $\wt G$ into four subgraphs $N(j_1)\cap N(j_2)\cap\wt G$, $N'(j_1)\cap N(j_2)\cap\wt G$, $N(j_1)\cap N'(j_2)\cap\wt G$ and $N'(j_1)\cap N'(j_2)\cap\wt G$. The number of vertices and the number of edges in each of the four subgraphs as well as the number of edges between any two of the four subgraphs can be computed using strong regularity of $\wt G$ and the assumptions of the lemma in terms of only two parameters: $n_{j_1,j_2}:=|N(j_1)\cap N(j_2)\cap\wt G|$ and the number $e_{j_1,j_2}$ of edges in $N(j_1)\cap N(j_2)\cap\wt G$. Using~\eqref{xj-prime}, one can check that
\be\label{xj12-prime}
x_{j_1}'\cdot x_{j_2}'=\frac{19}{270}n_{j_1,j_2}-\frac{52}{81}.
\ee
(The other variable $e_{j_1,j_2}$ cancels out.)

For $j\in G\setminus \wt G$, we denote by $x_j''=x_j-x_j'$ the projection of $x_j$ onto the orthogonal complement of $\lin\{x_i,i\in\wt G\}$. If $j_1=j_2\in G\setminus \wt G$, then $n_{j_1,j_2}=20$, so by~\eqref{xj12-prime} all projections $x_j''$, $j\in G\setminus\wt G$, have the same Euclidean norm, which means they belong to a ($2$-dimensional, planar) circle. For convenience, we use the normalized projections $x_j''':=\frac{x_j''}{\|x_j''\|}$. If $j_1$ and $j_2$ from $G\setminus \wt G$ are adjacent, then $n_{j_1,j_2}=8$, so from~\eqref{xj12-prime} we get $x_{j_1}'''\cdot x_{j_2}'''=-\frac45$. If $j_1$ and $j_2$ from $G\setminus \wt G$ are not adjacent, then from~\eqref{xj12-prime} we find $x_{j_1}'''\cdot x_{j_2}'''=-\frac3{10}n_{j_1,j_2}+\frac{17}5$. Moreover, we claim that if $j_1$ and $j_2$ from $G\setminus \wt G$ are not adjacent, then $x_{j_1}'''\cdot x_{j_2}'''$ is either $1$ or $-\frac45$. Indeed, if there is a common neighbor $j\in N(j_1)\cap N(j_2) \cap (G\setminus\wt G)$, then $x_{j_1}'''\cdot x_{j}'''=-\frac45$ and $x_{j_2}'''\cdot x_{j}'''=-\frac45$ imply that either $x_{j_1}'''=x_{j_2}'''$ and we are done, or $x_{j_1}'''\cdot x_{j_2}'''=\frac7{25}$ which cannot happen as then $n_{j_1,j_2}=\frac{52}{5}$ is not an integer. Otherwise, all $14$ common neighbors of $j_1$ and $j_2$ are in $\wt G$, so $n_{j_1,j_2}=14$ and $x_{j_1}'''\cdot x_{j_2}'''=-\frac45$.

We obtained that $x_{j_1}'''\cdot x_{j_2}''' \in \{-\frac45,1\}$ for any $j_1,j_2\in G\setminus\wt G$. This implies that $x_j'''$, $j\in G\setminus\wt G$, attains only one of two values. After a proper choice of coordinate system in the corresponding two dimensional space, these values can be written as $(1,0)$ and $(-\frac45,\frac35)$. But then clearly
$
\sum_{i\in G\setminus\wt G}x_i''\ne(0,0).
$
On the other hand, $\sum_{i\in G}x_i=\zero$ and $x_i''=(0,0)$ for $i\in \wt G$, so
$
\sum_{i\in G\setminus\wt G}x_i''=(0,0),
$
which is a contradiction that completes the proof of the lemma.
\end{proof}

\section{The case of $16$-coclique}\label{empty16}

In this section we prove that $16$-coclique cannot be an induced subgraph of $SRG(76,30,8,14)$.

\begin{proof}[Proof of Lemma~\ref{16coclique}.]
Suppose $\wt G$ is a $16$-coclique in $G$ which is a $SRG(76,30,8,14)$. By Lemma~\ref{new-rank-lemma}~(ii), $\rank B(\wt G)=\rank(\lin(\{x_i,\,i\in\wt G\}))=16$, where $x_i\in\R^{18}$ is the Euclidean representation of $i\in G$. As in the previous section, we consider orthogonal projection of $x_j$, $j\in G\setminus \wt G$ onto the orthogonal complement of $\lin\{x_i,i\in\wt G\}$ in $\R^{18}$, which is a two-dimensional Euclidean space.

For $j\in G\setminus \wt G$, denote by $x_j'$ the projection of $x_j$ onto $\lin\{x_i,i\in\wt G\}$. Using the dual Euclidean representation of $G$, namely that for any $i\in G$ there exists $z_i\in \R^{57}$ satisfying~\eqref{pq_def} with $(p,q)=(\frac1{15},-\frac1{15})$, we immediately obtain $|N(j)\cap\wt G|=|N'(j)\cap\wt G|=8$ for any $j\in G\setminus\wt G$. Following the techniques of the previous section, this allows to verify that
\[
x_j'=-\frac4{15}\sum_{i\in N(j)\cap\wt G}x_i+\frac7{30}\sum_{i\in N'(j)\cap\wt G}x_i,
\]
and
\be\label{xj12-prime-k16}
x_{j_1}' \cdot x_{j_2}'=\frac{19}{90}n_{j_1,j_2}-\frac{112}{135},
\ee
for $j,j_1,j_2\in  G\setminus\wt G$, where $n_{j_1,j_2}=|N(j_1)\cap N(j_2)\cap\wt G|$.
If $j_1=j_2$, then $n_{j_1,j_2}=8$, so all projections $x_j'':=x_j-x_j'$, $j\in G\setminus\wt G$, have the same Euclidean norm, which means they belong to a ($2$-dimensional, planar) circle. Again we define the normalized projections as $x_j''':=\frac{x_j''}{\|x_j''\|}$.

Using~\eqref{xj12-prime-k16}, if $a\in\{0,1\}$ is the number of edges (adjacency) between $j_1$ and $j_2$, then
$x_{j_1}'''\cdot x_{j_2}'''=-\frac32n_{j_1,j_2}+7-3a\in[-1,1]$,
which, as $n_{j_1,j_2}$ is integer, leads to one of the following four possibilities:
\be\label{proj20}
x_{j_1}'''\cdot x_{j_2}'''=\begin{cases}
1,&\text{if }n_{j_1,j_2}=2\text{ and }a=1,\\
-\frac12,&\text{if }n_{j_1,j_2}=3\text{ and }a=1,\\
1,&\text{if }n_{j_1,j_2}=4\text{ and }a=0,\\
-\frac12,&\text{if }n_{j_1,j_2}=5\text{ and }a=0.
\end{cases}
\ee
In particular, $x_{j_1}'''\cdot x_{j_2}'''\in\{1,-\frac12\}$, so that there are only three possible values for $x_j'''$, $j\in G\setminus\wt G$, which are the vertices of an equilateral triangle inscribed into the unit circle. Now let $\{H_1,H_2,H_3\}$ be the partition of $G\setminus\wt G$ such that the value of $x_j'''$ is the same for any $j$ in one component of the partition. Without loss of generality, $x_j'''=(\cos(2t\pi/3),\sin(2t\pi/3))$, $j\in H_t$, $t=1,2,3$. Arguing as in the end of the proof of Lemma~\ref{srg40},
we have $\sum_{j\in G\setminus\wt G}x_j'''=(0,0)$, which implies $|H_1|=|H_2|=|H_3|=20$.

It is sufficient to work with $H_1$, but the same statements are valid for the other two components of the partition. First we show that $H_1$ is $2$-regular. For any $i\in\wt G$, we have $N(i)\subset G\setminus\wt G$ and $|N(i)|=30$. We claim that $|N(i)\cap H_1|=10$. By applying projections to~\eqref{vertex-through-neighbors}, we have that $\sum_{j\in N(i)}x_j'''=(0,0)$, therefore there is equal number of elements of $N(i)$ in each part $H_1$, $H_2$, and $H_3$. Hence, $|N(i)\cap H_1|=10$. Note that by~\eqref{proj20}, two different vertices from $H_1$ have either two or four common neighbors in $\wt G$ if they are adjacent or non-adjacent, respectively. Computing the number of (non-oriented) paths of length $2$ originating and terminating in $H_1$ going through $\wt G$, we obtain that there are $20$ edges in $H_1$. Using the Euclidean representation of $G$ in $\R^{57}$, we verify that $(\sum_{i\in H_1}z_i)^2=0$, so for any $j\in H_1$ the equation $z_j\cdot (\sum_{i\in H_1}z_i)=0$ implies $|N(j)\cap H_1|=2$, and $H_1$ is $2$-regular.

Any $2$-regular graph is a union of cycles. Next we show that if $C_l$ is a cycle of length $l$ in $H_1$, then for any $i\in\wt G$, we have $|N(i)\cap C_l|=l/2$, in particular $l$ is even and is not less than $4$. We know that $|N(i)\cap H_1|=10$, so if $H_1$ consists only of one cycle, we are done. Otherwise, it is enough to show for any two cycles $C_{l_1}$ and $C_{l_2}$ in $H_1$ of lengths $l_1$ and $l_2$ respectively, we have $l_1/l_2=|N(i)\cap C_{l_1}|/|N(i)\cap C_{l_2}|$. Let $a_t=|N(i)\cap C_{l_t}|$, $t=1,2$. Recall that for a subset $A$ of vertices of $G$, we set $\br{A}:=\sum_{i\in A}x_i$. It is straightforward to verify that $(l_2 \br{C_{l_1}}-l_1\br{C_{l_2}})^2=0$, and then $0=x_i\cdot(l_2 \br{C_{l_1}}-l_1\br{C_{l_2}})=(l_2a_1-l_1a_2)p$
yields the desired $l_1/l_2=a_1/a_2$.

 Since all projections $x_j''$, $j\in H_1$, are the same, and they are projections onto a $2$-dimensional subspace of $\R^{18}$, we have $\rank(\lin(\{x_j,\,j\in H_1\}))\le 17$, so by Lemma~\ref{new-rank-lemma}~(iv), there are at least $4$ cycles in $H_1$. Therefore, there are only the following three possibilities for the lengths of the cycles: five cycles of length $4$, or two cycles of length $6$ and two cycles of length $4$, or one cycle of length $8$ and three cycles of length $4$. In any of the cases, there is a cycle $C_4\subset H_1$ of length $4$, which will suffice for us to complete the proof.

Suppose that $\wt G=\{g_1,\dots,g_{16}\}$. For $i\in H_1$, define $A(i)$ as the $8$-element subset of $\{1,2,\dots,16\}$ such that $N(i)\cap\wt G=\{g_t:t\in A(i)\}$. By~\eqref{proj20}, if $i,j\in H_1$ are adjacent, then $|A(i)\cap A(j)|=2$; and if $i,j\in H_1$ are non-adjacent, then $|A(i)\cap A(j)|=4$. It is not hard to see that without loss of generality (by permutation of indexes) we can assume that our $C_4$ has the following representation:
\begin{gather*}
\{A(i):i\in C_4\}=\{\{1,2,3,4,5,6,7,8\},\{1,2,9,{10},{11},{12},{13},{14}\},\\
\{5,6,7,8,{13},{14},{15},{16}\},\{3,4,9,{10},{11},{12},{15},{16}\}\}.
\end{gather*}
Now let $\M$ be the collection of all $8$-element subsets of $\{1,2,\dots,16\}$, then $|\M|=\binom{16}8=12870$. Consider the following graph on $\M$: two vertices $A_1,A_2\in\M$ are adjacent if and only if $|A_1\cap A_2|$ is either $2$ or $4$. We fix $\M_0:=\{A(i):i\in C_4\}$, $|\M_0|=4$, and define $\M_1:=\{A\in\M:\M_0\subset N(A)\}$, where $N(A)$ denotes all neighbors of $A$ in our graph on $\M$. Clearly, $\{A(i):i\in H_1\setminus C_4\}$ is a $16$-clique in $\M_1$. We obtain a contradiction by showing that the largest clique in $\M_1$ has size $15$.

To this end, we use the mathematical software Sage, in particular, the function {\tt clique\_number} returning the order of the largest clique of the given graph, which is based on the Bron-Kerbosch algorithm~\cite{Br-Ke}. Note that $\M_1$ can be easily generated, it has $906$ vertices and $176672$ edges. The procedure's running time is well under one hour on a modern personal computer. See~\cite{BPR-web} for the source code and the output.
\end{proof}

\begin{remark}
One can use the second cycle of length four to reduce the problem to graphs of smaller size that would not require the use of the more sophisticated algorithms for the computation of the largest clique. However, this would lead to a somewhat more complicated programming and longer running time.
\end{remark}

\section{The case of $K_{6,10}$}\label{K6,10}

In this section we prove that $K_{6,10}$ cannot be an induced subgraph of $SRG(76,30,8,14)$.

\begin{proof}[Proof of Lemma~\ref{K610}.]
Let $\wt G$ is a $K_{6,10}$ and a subgraph of $G$, which is $SRG(76,30,8,14)$. By Lemma~\ref{new-rank-lemma}~(iii), $\rank B(\wt G)=\rank(\lin(\{x_i,\,i\in\wt G\}))=15$, where $x_i\in\R^{18}$ is the Euclidean representation of $i\in G$. As in the previous sections, we consider orthogonal projection of $x_j$, $j\in G\setminus \wt G$ onto the orthogonal complement of $\lin\{x_i,i\in\wt G\}$ in $\R^{18}$, which is now a three-dimensional Euclidean space.

Let $\wt G_1$ be the $6$-coclique in $\wt G$, and $\wt G_2$ be the $10$-coclique in $\wt G$, so that $\wt G=\wt G_1 \cup \wt G_2$. For any $j\in G\setminus \wt G$, we claim that $|N(j)\cap \wt G_1|=2$ and $|N(j)\cap \wt G_2|=4$. As before, for a subset $A$ of vertices of $G$, we set $\br{A}:=\sum_{i\in A}x_i$. Gram matrix of $\br{G_1}$, $\br{G_2}$ is singular, and $3\br{G_1}+2\br{G_2}=0$, which, after multiplication by $x_j$, leads to $57|N(j)\cap \wt G_1|+ 38 |N(j)\cap \wt G_2| =266$. Arguing similarly for the dual Euclidean representation, we obtain another linear equation $-|N(j)\cap \wt G_1|+|N(j)\cap \wt G_2|=2$, and the claim follows.

Following the same process as in the previous two sections (with the difference that the number of neighbors in each $\wt G_1$ and $\wt G_2$ needs to be tracked), this allows to verify that
\begin{equation}\label{proj-fla-610}
x_j'=-\frac14\sum_{i\in N(j)\cap\wt G}x_i+\frac14\sum_{i\in N'(j)\cap\wt G}x_i
\end{equation}
and
\be\label{xj12-k610}
x_{j_1}'\cdot x_{j_2}'=\frac{19}{90}n_{j_1,j_2}-\frac{43}{90},
\ee
for $j,j_1,j_2\in  G\setminus\wt G$, where $n_{j_1,j_2}=|N(j_1)\cap N(j_2)\cap\wt G|$.
If $j_1=j_2$, then $n_{j_1,j_2}=6$, so all projections $x_j'':=x_j-x_j'$, $j\in G\setminus\wt G$, have the same Euclidean norm $\|x_j''\|=\sqrt{\frac{19}{90}}$, which means they belong to a sphere in a three-dimensional Euclidean space. Again we define the normalized projections as $x_j''':=\frac{x_j''}{\|x_j''\|}$. Using~\eqref{xj12-k610},  if $j_1$ and $j_2$ are non-adjacent, then $x_{j_1}'''\cdot x_{j_2}'''=-n_{j_1,j_2}+3$. If $j_1$ and $j_2$ are adjacent, then $x_{j_1}'''\cdot x_{j_2}'''=-n_{j_1,j_2}+1$. Since $n_{j_1,j_2}$ is an integer, this implies that $x_{j_1}'''\cdot x_{j_2}'''$ can only take one of the three values from $\{-1,0,1\}$.

Therefore, it is easy to see that the possible values of $x_j'''$, $j\in G\setminus\wt G$, are vertices of an octahedron in $\R^3$, so without loss of generality we can assume that $x_j'''\in\{(\pm1,0,0),(0,\pm1,0),(0,0,\pm1)\}$, $j\in G\setminus\wt G$. Let $H_1=\{i\in G\setminus\wt G:x_i'''=(1,0,0)\}$ and $H_2=\{i\in G\setminus\wt G:x_i'''=(-1,0,0)\}$. Recall that $x_i\mapsto x_i'''$ is the normalized orthogonal projection onto the $3$-dimensional space which is the orthogonal complement of $\lin\{x_i,i\in\wt G\}$, while the whole space $\lin\{x_i,i\in G\}$ is $18$-dimensional. Therefore, 
\be\label{rank_hhg}
\rank(\lin\{x_i,i\in H_1\cup H_2\cup\wt G\})\le16.
\ee

Now we will find the number of vertices in $H_1$ and in $H_2$. Arguing as in the end of the proof of Lemma~\ref{srg40},
we have $\sum_{j\in G\setminus\wt G}x_j'''=(0,0,0)$. Then clearly $|H_1|=|H_2|$. We use the 2-design property~\eqref{design} of Euclidean representation of $G$ with $y=x_i'''$ for some fixed $i\in H_1$. Clearly, $x_j\cdot x_i'''=0$ for $j\in\wt G$. On the other hand, for $j\in G\setminus\wt G$, we have
\[
x_j\cdot x_i'''
=\begin{cases}
\sqrt{\frac{19}{90}},&\text{if }j\in H_1,\\
-\sqrt{\frac{19}{90}},&\text{if }j\in H_2,\\
0,&\text{otherwise}.
\end{cases}
\]
Thus, by~\eqref{design}, $\frac{19}{90}(|H_1|+|H_2|)=\frac{76}{18}$, so $|H_1|=|H_2|=10$.

Keeping $i\in H_1$ fixed, choose arbitrary $t\in H_1$. By~\eqref{vertex-through-neighbors} for $x_t$, we have $x_t+\frac18\sum_{j\in N(t)}x_j={0}$, and multiplying by $x_i'''$, we obtain  $|N(t)\cap H_2|=8+|N(t)\cap H_1|$ for any $t\in H_1$. Similarly, $|N(t)\cap H_1|=8+|N(t)\cap H_2|$ for any $t\in H_2$. These relations readily imply that inverting the edges between $H_1$ and $H_2$ leads to a regular graph of degree $2$ on the $20$ vertices of $H_1\cup H_2$. Any regular graph of degree $2$ is a union of cycles, which makes (computer) generation of all possible subgraphs $H_1\cup H_2$ rather straightforward. We will also utilize a very simple consequence of Euclidean representation which significantly lowers the number of subgraphs that need verification. Namely, the number $w$ of edges in $H_1$ is equal to the number of edges in $H_2$ and does not exceed $3$. The equality is clear, and the determinant of Gram matrix of $\br{H_1}$, $\br{H_2}$ is $-\frac{5776}{81}w + \frac{19760}{81}\ge0$, so $w\le 3$.

We generated programmatically all graphs $H_1\cup H_2$ satisfying $|H_1|=|H_2|=10$, $|N(t)\cap H_{3-i}|=8+|N(t)\cap H_i|$ for any $t\in H_i$, $i=1,2$, and that the number of edges in $H_1$ is equal to the number of edges in $H_2$ and does not exceed $3$. We obtained $5526$ graphs with some graphs possibly isomorphic to each other. Next, for every generated graph (which is a possible subgraph $H_1\cup H_2$ of $G$), we verify whether: (i) the rank of $(x_i\cdot x_j)_{i,j\in H_1\cup H_2}$ does not exceed $16$; (ii) the smallest eigenvalue of $(x_i\cdot x_j)_{i,j\in H_1\cup H_2}$ is non-negative. The conditions~(i) and~(ii) must be valid by~\eqref{rank_hhg} and Proposition~\ref{prop-rank}. There are only four graphs for which the above two conditions are satisfied, namely, when there are five cycles of length $4$ after edge inversion between $H_1$ and $H_2$.

To handle these four cases, we will add one more vertex to our subgraph and check the rank condition~(i) (satisfied due to~\eqref{rank_hhg}). Recall that $\wt G_1$ is the $6$-coclique of $\wt G$, which is a $K_{6,10}$. There is a vertex $t\in \wt G_1$ such that $|N(t)\cap H_1|=|N(t)\cap H_2|\le 3$. Indeed, let $t\in \wt G_1$ be arbitrary. By~\eqref{vertex-through-neighbors} for $x_t$, we have $x_t+\frac18\sum_{j\in N(t)}x_j={0}$, and multiplying by $x_i'''$, where $i\in H_1$, we obtain $|N(t)\cap H_1|=|N(t)\cap H_2|$. But recall that for any vertex $j\in H_1\cup H_2$, we have $|N(j)\cap \wt G_1|=2$, so there are $40$ edges between $H_1\cup H_2$ and $\wt G_1$. Hence, there must be $t\in\wt G_1$ with no more than $\frac{40}{6}$ neighbors in $H_1\cup H_2$, and the claim follows. Now, by computer verification of $66104$ graphs on $21$ vertices (generated by considering all choices of neighbors of $t$ in $H_1$ and $H_2$ for the four remaining subgraphs $H_1\cup H_2$), it turns out that the rank of $(x_i\cdot x_j)_{i,j\in H_1\cup H_2\cup\{t\}}$ is always at least $17$, which is a contradiction.

We want to remark that the computations required for this lemma take less than $15$ minutes on a modern personal computer.
\end{proof}

\begin{bibsection}
\begin{biblist}

\bib{BPR-web}{article}{
		author={Bondarenko, A.},
        author={Prymak, A.},
        author={Radchenko, D.},
		title={Supplementary files for the proof of non-existence of SRG(76,30,8,14)},
		eprint={http://prymak.net/SRG-76-30-8-14/}
}

\bib{Br-Ke}{article}{
    author={Bron, C.},
    author={Kerbosch, J.},
    title={Algorithm 457: Finding All Cliques of an Undirected Graph},
    journal={Commun. ACM.},
    volume={16},
    number={9},
    year={1973},
    pages={575--577}
}

\bib{Br-www}{article}{
		author={Brouwer, A. E.},
		title={Parameters of strongly regular graphs},
		journal={Electronically published tables},
		eprint={http://www.win.tue.nl/~aeb/graphs/srg/srgtab.html}
}

\bib{BrHa}{book}{
   author={Brouwer, Andries E.},
   author={Haemers, Willem H.},
   title={Spectra of graphs},
   series={Universitext},
   publisher={Springer, New York},
   date={2012},
   pages={xiv+250},
}

\bib{Br-vL-82}{article}{
   author={Brouwer, A. E.},
   author={van Lint, J. H.},
   title={Strongly regular graphs and partial geometries},
   conference={
      title={Enumeration and design},
      address={Waterloo, Ont.},
      date={1982},
   },
   book={
      publisher={Academic Press},
      place={Toronto, ON},
   },
   date={1984},
   pages={85--122},
}

\bib{Cam}{book}{
   author={Cameron, Peter J.},
   title={Strongly regular graphs},
   series={Topics in Algebraic Graph Theory},
   publisher={Cambridge University Press, Cambridge},
   date={2004},
}

\bib{DaXu}{book}{
   author={Dai, Feng},
   author={Xu, Yuan},
   title={Approximation theory and harmonic analysis on spheres and balls},
   series={Springer Monographs in Mathematics},
   publisher={Springer},
   place={New York},
   date={2013},
   pages={xviii+440},
}

\bib{Deg}{thesis}{
    author={Degraer, J.},
    title={Isomorph-free exhaustive generation algorithms for association schemes},
    organization={Grent University},
    type={Ph.D. thesis},
    date={2007}
}

\bib{Sch}{article}{
   author={Schoenberg, I. J.},
   title={Positive definite functions on spheres},
   journal={Duke Math. J.},
   volume={9},
   date={1942},
   pages={96--108},
}


\bib{SAGE}{article}{
    author={Stein, W.\thinspace{}A.},
    author={others},
    title={{S}age {M}athematics {S}oftware ({V}ersion 5.7)},
    organization={The Sage Development Team},
    eprint={http://www.sagemath.org},
    date={2013}
}

\end{biblist}
\end{bibsection}

\end{document}